\theoremstyle{definition}
\newtheorem{lemma}{Lemma}
\newtheorem{theorem}[lemma]{Theorem}
\newtheorem{proposition}[lemma]{Proposition}
\newcommand{\periodafter}[1]{#1.}
\titleformat{\subsection}[runin]
{\normalfont\bfseries}{\thesubsection}{0.5em}{\periodafter}
\numberwithin{equation}{section}
\renewcommand{\thesubsection}{\arabic{section}.\arabic{subsection}.}
\newcounter{RomanNumber}
\renewcommand{\H}{\mathrm{H}}
\newcommand{\Der}{\mathrm{Der}}
\newcommand{\Ider}{\mathrm{Ider}}
\title{\textsf{The first cohomology of $D(2, 1; \alpha)$ with coefficients in baby Verma modules}}
\author{\textsc{Shuang Lang}$^{a}$, \textsc{Wende Liu}$^{b}$, \textsc{Shujuan Wang}$^{c}$\footnote{Correspondence:  wangsj@shmtu.edu.cn (S.J. Wang)},
\\
\\
\ \ \textit{$^{a}$School of Mathematical Sciences},
  \textit{Dalian University of Technology} \\
  \textit{Dalian 116024. P. R. China}
  \\
\ \ \textit{$^{b}$School of Mathematics and Statistics},
  \textit{Hainan Normal University} \\
  \textit{Haikou 571158. P. R. China}
  \\
\ \ \textit{$^{c}$Department of Mathematics},
  \textit{Shanghai Maritime University} \\
  \textit{Shanghai 201306. P. R. China}
  }
\date{ }
\begin{document}
\maketitle
\begin{quotation}
\small\noindent \textbf{Abstract}:
Over a field of characteristic $p>3,$ the first cohomology group of Lie superalgebra $D(2, 1; \alpha)$ with coefficients in baby Verma modules is determined by calculating the outer superderivations of $D(2, 1; \alpha).$

\vspace{0.2cm} \noindent{\textbf{Keywords}}: $D(2, 1; \alpha)$; baby Verma module; cohomology

\vspace{0.2cm} \noindent{\textbf{Mathematics Subject Classification 2010}}: 17B10, 17B30
\end{quotation}

\setcounter{section}{0}
\section{Introduction}
The cohomology of Lie (super)algebras has many important applications in mathematics and physics.
Kac proposed the idea of determining the first cohomology groups of the simple Lie superalgebras with coefficients in the finite-dimensional irreducible modules over a field of characteristic $0$ (see \cite{Kac}).
Su and Zhang computed the first and second cohomology groups of the classical Lie superalgebras $\mathrm{\mathfrak{sl}}_{m\mid n}$ and $\mathrm{\mathfrak{osp}}_{2\mid 2n}$ with coefficients in the finite-dimensional irreducible modules and Kac modules (see \cite{Su}).
However, most of results on the finite-dimensional Lie (super)algebras over a field of characteristic 0 were not transferable to the modular case (see \cite[Whitehead's Theorem]{NL}, for example).

The cohomology groups are of importance for studying the structure and the classification of modular Lie superalgebras.
Yuan, Liu and Bai determined the second cohomology group with coefficients in the trivial module for the odd Hamiltonian superalgebra and the odd contact superalgebra over a field of characteristic $p>3$ (see \cite{YLB}).
Sun, Liu and Wu obtained the low-dimensional cohomology groups of $\mathrm{sl}_{m\mid n}$ with coefficients in Witt superalgebras over a field of characteristic $p>2$ (see \cite{SLW}).
Wang and Liu determined the first cohomology group of $\mathrm{\mathfrak{sl}}_{2\mid 1}$ with coefficients in the simple modules over a field of characteristic $p>3$ (see \cite{WL}).

In this paper, the field $\mathbb{F}$ is an algebraically closed field of characteristic $p > 3$.
We first characterize the $0$-weight derivation space by the weight space decompositions of $D(2,1;\alpha)$ and baby Verma modules relative to a Cartan subalgebra of $D(2,1;\alpha)$. Then we determine the first cohomology group of Lie superalgebra $D(2, 1; \alpha)$ with coefficients in baby Verma modules over $\mathbb{F}$ by computing the outer superderivations from $D(2,1;\alpha)$ to baby Verma modules in $0$-weight derivation space.

\section{Preliminaries}\label{2}

\subsection{Lie superalgebra cohomology}
We recall some facts as follows:
Let $\mathfrak{g}$ be a Lie superalgebra and $M$ a $\mathfrak{g}$-module. A $\mathbb{Z}_{2}$-homogeneous linear mapping $\varphi: \mathfrak{g}\rightarrow M$ is a derivation of parity $|\varphi|$, if
\begin{equation}\label{222222}
    \varphi([x, y])=(-1)^{|\varphi||x|}x\varphi(y)-(-1)^{|y|(|\varphi|+|x|)}y\varphi(x),
\end{equation}
where $x, y\in \mathfrak{g}$. Let $\mathrm{Der}(\mathfrak{g}, M)$ be the vector space spanned by all the $\mathbb{Z}_{2}$-homogeneous derivations from $\mathfrak{g}$ to $M$ and $\mathrm{Ider}(\mathfrak{g}, M)$ the vector space spanned by all linear mappings $\mathfrak{D}_{m}$
for all $\mathbb{Z}_{2}$-homogeneous elements $m\in M$,
where  $\mathfrak{D}_{m}$ is defined by
 \begin{equation*}\label{111110}
    \mathfrak{D}_{m}(x)=(-1)^{|x||m|}xm
\end{equation*}
for all $x\in \mathfrak{g}$ and $\mathfrak{D}_{m}$ is called the inner derivation with respect to $m$ of parity $|m|$.
Every element of $\Der(\mathfrak{g}, M)$ which does not belong to $\Ider(\mathfrak{g}, M)$ is called an outer superderivation from $\mathfrak{g}$ to $M$.
Suppose that
 \begin{center}
    $\mathfrak{g}=\oplus_{\beta\in\mathfrak{h}^{*}}\mathfrak{g}_{\beta}$ and $M=\oplus_{\beta\in\mathfrak{h}^{*}}M_{\beta}$
 \end{center}
are the root space decompositions of $\mathfrak{g}$ and $M$ with respect to a Cartan subalgebra $\mathfrak{h}$ of $\mathfrak{g}$, respectively.
By \cite[Lemma 2.1]{WL}, we have
\begin{equation*}\label{der}
    \Der(\mathfrak{g}, M)=\Der(\mathfrak{g}, M)_{(0)}+\mathrm{Ider}(\mathfrak{g}, M),
\end{equation*}
where
$$\Der(\mathfrak{g}, M)_{(0)}=\{\varphi\in \Der(\mathfrak{g}, M)\mid \varphi(\mathfrak{g}_{\beta})\subseteq M_{\beta}, \beta\in \mathfrak{h}^{*}\}$$
and $\Der(\mathfrak{g}, M)_{(0)}$ is called the $0$-weight derivation space with respect to $\mathfrak{h}$.
By the definition of the first cohomology group of $\mathfrak{g}$ with coefficients in $M$, we have
 \begin{equation*}\label{h}
 \begin{aligned}
    \H^{1}(\mathfrak{g}, M)&=\Der(\mathfrak{g}, M)/\Ider(\mathfrak{g}, M)\\
    &\cong(\Der(\mathfrak{g}, M)_{(0)}+\Ider(\mathfrak{g}, M))/\Ider(\mathfrak{g}, M)\\
    &\cong\Der(\mathfrak{g}, M)_{(0)}/(\Der(\mathfrak{g}, M)_{(0)}\cap\Ider(\mathfrak{g}, M)).
    \end{aligned}
 \end{equation*}
It allows us to characterize $\H^{1}(\mathfrak{g}, M)$ by computing the outer superderivations from $\mathfrak{g}$ to $M$ in $\Der(\mathfrak{g}, M)_{(0)}$.

\subsection{Lie superalgebra $D(2, 1; \alpha)$}

Let $\mathfrak{g}$ denote the Lie superalgebra $D(2, 1; \alpha)$ with $\alpha \neq 0, -1, \infty$ (see \cite{Sche}).
The Lie superalgebra $\mathfrak{g}$ is 17-dimensional for which
\begin{center}
    $\mathfrak{g}_{\bar{0}}=\mathrm{sl}(2)_{1}\oplus \mathrm{sl}(2)_{2}\oplus \mathrm{sl}(2)_{3}$ and $\mathfrak{g}_{\bar{1}}=V_{1}\boxtimes V_{2}\boxtimes V_{3},$
\end{center}
where $\mathrm{sl}(2)_{i}$ is a copy of $\mathrm{sl}(2)$, $\boxtimes$ is the notation of the outer tensor product and $V_{i}$ is the natural module of $\mathrm{sl}(2)_{i},$ $i=1, 2, 3$. Let $\mathfrak{g}_{\bar{1}}$ be a $\mathfrak{g}_{\bar{0}}$-module by means of
\begin{align*}
  [x_{1}+x_{2}+x_{3}, v_{1}\otimes v_{2}\otimes v_{3}]&:=(x_{1}+x_{2}+x_{3}).v_{1}\otimes v_{2}\otimes v_{3}\\
   &=x_{1}.v_{1}\otimes v_{2}\otimes v_{3}+v_{1}\otimes x_{2}.v_{2}\otimes v_{3}+v_{1}\otimes v_{2}\otimes x_{3}.v_{3},
\end{align*}
where $x_{1}+x_{2}+x_{3}\in \mathfrak{g}_{\bar{0}}$ and $v_{1}\otimes v_{2}\otimes v_{3}\in \mathfrak{g}_{\bar{1}}$.
Let $\{h_{i}, e_{i}, f_{i}\mid i=1,2,3\}$ be a basis of $\mathrm{sl}(2)_{1}\oplus \mathrm{sl}(2)_{2}\oplus \mathrm{sl}(2)_{3}$ satisfying
\begin{equation}\label{gsss}
  [e_{i},f_{i}]=h_{i},\quad [h_{i},e_{i}]=2e_{i},\quad [h_{i},f_{i}]=-2f_{i},
\end{equation}
where $h_{i}, e_{i}, f_{i}\in \mathrm{sl}(2)_{i}$ are viewed as the elements in $\mathrm{sl}(2)_{1}\oplus \mathrm{sl}(2)_{2}\oplus \mathrm{sl}(2)_{3}$ by canonical embedding for $i=1, 2, 3$.
Let $\{\omega_{i}, \omega_{-i}\}$ be a basis of $V_{i},$ $i=1, 2, 3.$ Then $\{\omega_{\pm1}\otimes \omega_{\pm2}\otimes \omega_{\pm3}\}$ is a basis of $V_{1}\boxtimes V_{2}\boxtimes V_{3}.$
For convenience, write
\begin{align*}
 x_{1}&:=\omega_{1}\otimes \omega_{2}\otimes \omega_{3}, & y_{1}&:=\omega_{-1}\otimes \omega_{2}\otimes \omega_{3}, \\
  x_{2}&:=\omega_{1}\otimes \omega_{2}\otimes \omega_{-3}, & y_{2}&:=\omega_{-1}\otimes \omega_{2}\otimes \omega_{-3}, \\
x_{3}&:=\omega_{1}\otimes \omega_{-2}\otimes \omega_{3}, &y_{3}&:=\omega_{-1}\otimes \omega_{-2}\otimes \omega_{3}, \\
 x_{4}&:=\omega_{1}\otimes \omega_{-2}\otimes \omega_{-3}, &y_{4}&:=\omega_{-1}\otimes \omega_{-2}\otimes \omega_{-3}.
\end{align*}
We describe the multiplications of $\mathfrak{g}$ as follows.
The multiplication of $\mathfrak{g}_{\bar{0}}$ follows from (\ref{gsss}).
The multiplication of $\mathfrak{g}_{\bar{0}}$ and $\mathfrak{g}_{\bar{1}}$ is listed in the following:
    \begin{align*}
       [h_{1}, x_{i}]&=x_{i}, &[h_{1}, y_{i}]&=-y_{i},  &[e_{1}, y_{i}]&=x_{i}, &[f_{1}, x_{i}]&=y_{i},\\
       [h_{2}, k_{j}]&=k_{j}, &[h_{2}, k_{l}]&=-k_{l}, &[h_{3}, k_{i}]&=(-1)^{i+1}k_{i}, &[e_{2}, k_{l}]&=k_{l-2},\\
       [f_{2}, k_{j}]&=k_{j+2}, &[e_{3}, k_{s}]&=k_{s-1}, &[f_{3}, k_{t}]&=k_{t+1},
     \end{align*}
where $k_{p}=x_{p}$ or $y_{p}$, $p=i, j, l, s, t$ and $i\in\{1, 2, 3, 4\}$, $j\in\{1,2\}$, $l\in \{3,4\}$, $s\in\{2,4\}$, $t\in\{1,3\}$.
The multiplication of $\mathfrak{g}_{\bar{1}}$ is  listed in the following:
\begin{align*}
[x_{1}, y_{2}]&=-2e_{2},&[x_{1}, y_{3}]&=-2\alpha e_{3},&[x_{1}, y_{4}]&=-(1+\alpha)h_{1}+h_{2}+\alpha h_{3}, \\
 [x_{2}, y_{1}]&=2e_{2}, &[x_{2}, y_{4}]&=2\alpha f_{3},&[x_{2}, y_{3}]&=(1+\alpha)h_{1}-h_{2}+\alpha h_{3},\\
[x_{3}, y_{1}]&=2\alpha e_{3}, &[x_{3}, y_{4}]&=2f_{2}, &[x_{3}, y_{2}]&=(1+\alpha)h_{1}+h_{2}-\alpha h_{3},\\
[x_{4}, y_{2}]&=-2\alpha f_{3}, &[x_{4}, y_{3}]&=-2f_{2},&[x_{4}, y_{1}]&=-(1+\alpha)h_{1}-h_{2}-\alpha h_{3},\\
[y_{2}, y_{3}]&=2(1+\alpha)f_{1}, &[y_{1}, y_{4}]&=-2(1+\alpha)f_{1},&\\
[x_{2}, x_{3}]&=-2(1+\alpha)e_{1}, &[x_{1}, x_{4}]&=2(1+\alpha)e_{1}.
\end{align*}
We define a $p$-mapping $[p]$ on $\mathfrak{g}_{\bar{0}}$ such that $h_{i}^{[p]}=h_{i}, e_{i}^{[p]}=f_{i}^{[p]}=0$, $i=1, 2, 3$. Then $(\mathfrak{g}, [p])$ is a restricted Lie superalgebra.
In addition, $\mathfrak{h}:=\mathbb{F}h_{1}\oplus\mathbb{F}h_{2}\oplus\mathbb{F}h_{3}$ is said to be a Cartan subalgebra of $\mathfrak{g}.$
Let $\epsilon_{i}\in \frak{h}^*$ such that $\epsilon_{i}(h_{j})=\delta_{ij}$, where $\delta_{ij}$ is the Kronecker symbol and $i, j=1, 2, 3$.
We list the weight spaces of $\mathfrak{g}$ with respect to $\mathfrak{h}$ as follows:
\begin{center}
    $\mathfrak{g}_{2\epsilon_{i}}=\mathrm{span}_{\mathbb{F}}\{e_{i}\},$ $\mathfrak{g}_{-2\epsilon_{i}}=\mathrm{span}_{\mathbb{F}}\{f_{i}\},$ $\mathfrak{g}_{\pm\epsilon_{1}\pm\epsilon_{2}\pm\epsilon_{3}}=\mathrm{span}_{\mathbb{F}}\{\omega_{\pm1}\otimes\omega_{\pm2}\otimes\omega_{\pm3}\},$
\end{center}
where $i=1,2,3$.
We may choose a simple root system $\Delta=\{\epsilon_{1}-\epsilon_{2}-\epsilon_{3}, 2\epsilon_{2}, 2\epsilon_{3}\}$ of $\mathfrak{g}.$ Then $\mathfrak{g}$ has a triangular decomposition $$\mathfrak{g}=\mathfrak{n^{+}}\oplus \mathfrak{h} \oplus \mathfrak{n}^{-},$$ where $\mathfrak{n}^{+}=\mathrm{span}_{\mathbb{F}}\{ e_{1}, e_{2}, e_{3}, x_{1}, x_{2}, x_{3}, x_{4}\}$ and $\mathfrak{n}^{-}=\mathrm{span}_{\mathbb{F}}\{ f_{1}, f_{2}, f_{3}, y_{1}, y_{2}, y_{3}, y_{4}\}.$
By \cite[Remark 2.5]{Wang}, we may choose $\chi\in \mathfrak{g}_{\bar{0}}^{\ast}$ with $\chi(\mathfrak{n}_{\bar{0}}^{+})=0$ without loss of generality.
We define a 1-dimensional $\mathfrak{h}$-module $K_{\lambda}$ for each $\lambda\in\mathfrak{h}^{\ast}$, where each $h\in \mathfrak{h}$ acts as multiplication by $\lambda(h).$
Take
\begin{center}
    $\Lambda_{\chi}=\{\lambda\in\mathfrak{h}^{\ast}\mid \lambda(h)^{p}-\lambda(h^{[p]})=\chi(h)^{p}$ for all $h\in \mathfrak{h}\}.$
\end{center}
If $\lambda\in\Lambda_{\chi},$ then $K_{\lambda}$ is a $u_{\chi}(\mathfrak{h})$-module,
 where $u_{\chi}(\mathfrak{h})$ is a $\chi$-reduced universal enveloping algebra of $\mathfrak{h}$.
Recall that the baby Verma module of $\mathfrak{g}$ is $$Z_{\chi}(\lambda):=u_{\chi}(\mathfrak{g})\otimes_{u_{\chi}(\mathfrak{h}\oplus\mathfrak{n}^{+})}K_{\lambda},$$
where $K_{\lambda}$ is regarded as a $u_{\chi}(\mathfrak{h}\oplus\mathfrak{n}^{+})$-module by $\mathfrak{n}^{+}K_{\lambda}=0$  (see \cite{Wang}).
Let $v$ be a basis of $K_{\lambda}.$ Then $Z_{\chi}(\lambda)$ has a homogeneous basis
\begin{equation}\label{11111}
    f_{1}^{i_{1}}f_{2}^{i_{2}}f_{3}^{i_{3}}y_{1}^{j_{1}}y_{2}^{j_{2}}y_{3}^{j_{3}}y_{4}^{j_{4}}\otimes v,
\end{equation}
where $0\leq i_{1}, i_{2}, i_{3}\leq p-1$ and $j_{1}, j_{2}, j_{3}, j_{4}=0, 1.$

\subsection{Target-weight spaces}
The weights
$$\{0, \pm2\epsilon_{1}, \pm2\epsilon_{2}, \pm2\epsilon_{3}, \pm\epsilon_{1}\pm\epsilon_{2}\pm\epsilon_{3}\}$$
are called to be the target-weights.
In order to determine $\mathrm{Der}(\mathfrak{g}, Z_{\chi}(\lambda))_{(0)},$ we will give the weight spaces $Z_{\chi}(\lambda)_{\beta}$ for target-weight $\beta$. For convenience, let
$$J=\{(j_{1}, j_{2}, j_{3}, j_{4})\mid j_{1}, j_{2}, j_{3}, j_{4}=0, 1\}.$$
Let
\begin{equation*}
    \begin{aligned}
    J_{1}&=\{(j_{1}, j_{2}, j_{3}, j_{4})\mid j_{1}+j_{2}+j_{3}+j_{4}\ \mbox{is an even number}\},\\
    J_{2}&=\{(j_{1}, j_{2}, j_{3}, j_{4})\mid j_{1}+j_{2}+j_{3}+j_{4}=2\},\\
    J_{3}&=\{(j_{1}, j_{2}, j_{3}, j_{4})\mid j_{1}+j_{2}+j_{3}+j_{4}\ \mbox{is an odd number}\},\\
    J_{4}&=\{(j_{1}, j_{2}, j_{3}, j_{4})\mid j_{1}+j_{2}+j_{3}+j_{4}=3\}.
    \end{aligned}
\end{equation*}
Note that $J_{1}, J_{2}, J_{3}, J_{4}$ are subsets of $J$.

\begin{lemma}\label{222}
Suppose that $\lambda=\lambda_{1}\epsilon_{1}+\lambda_{2}\epsilon_{2}+\lambda_{3}\epsilon_{3}, \lambda_{1}, \lambda_{2}, \lambda_{3}\in \mathbb{F}$ and $\beta\in\{0, \pm2\epsilon_{i}, \pm\epsilon_{1}\pm\epsilon_{2}\pm\epsilon_{3}\mid i=1, 2, 3\}.$ Suppose that $\beta=\beta_{1}\epsilon_{1}+\beta_{2}\epsilon_{2}+\beta_{3}\epsilon_{3},$ where $\beta_{i}\in \mathbb{F},$ $i=1,2,3$.
For any element $(j_{1},j_{2},j_{3},j_{4})\in J$, denoted by $\theta$. Let $w_{\beta}^{\theta}$ denote $f_{1}^{\beta;\theta;1}f_{2}^{\beta;\theta;2}f_{3}^{\beta;\theta;3}y_{1}^{j_{1}}y_{2}^{j_{2}}y_{3}^{j_{3}}y_{4}^{j_{4}}\otimes v$, where $\beta;\theta;i$ satisfies the following equalities
\begin{alignat*}{2}
     &\beta;\theta;1=\frac{\lambda_{1}-\beta_{1}-j_{1}-j_{2}-j_{3}-j_{4}}{2},\\
     &\beta;\theta;2=\frac{\lambda_{2}-\beta_{2}+j_{1}+j_{2}-j_{3}-j_{4}}{2},\\
     &\beta;\theta;3=\frac{\lambda_{3}-\beta_{3}+j_{1}-j_{2}+j_{3}-j_{4}}{2},
   \end{alignat*}
and $\beta;\theta;k$ are the smallest nonnegative integers in the residue class containing $\beta;\theta;k$ modulo $p$ for $k=1,2,3$.
Then the target-weight spaces $$Z_{\chi}(\lambda)_{\beta}= \mathrm{span}_{\mathbb{F}}\{w_{\beta}^{\theta}\mid \theta\in J\}.$$

\end{lemma}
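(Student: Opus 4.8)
The plan is to compute, for each target-weight $\beta$, exactly which basis vectors \eqref{11111} of $Z_{\chi}(\lambda)$ lie in the $\beta$-weight space, using the action of $\mathfrak{h}$ on the PBW-type basis. Write a general basis vector as $m=f_{1}^{i_{1}}f_{2}^{i_{2}}f_{3}^{i_{3}}y_{1}^{j_{1}}y_{2}^{j_{2}}y_{3}^{j_{3}}y_{4}^{j_{4}}\otimes v$ with $0\le i_{1},i_{2},i_{3}\le p-1$ and $(j_{1},j_{2},j_{3},j_{4})=\theta\in J$. Since $v$ has weight $\lambda$, each $f_{k}$ contributes $-2\epsilon_{k}$, and each $y_{p}$ contributes the weight listed in the weight-space table for $\mathfrak{g}$ (namely $y_{1}\mapsto-\epsilon_{1}+\epsilon_{2}+\epsilon_{3}$, $y_{2}\mapsto-\epsilon_{1}+\epsilon_{2}-\epsilon_{3}$, $y_{3}\mapsto-\epsilon_{1}-\epsilon_{2}+\epsilon_{3}$, $y_{4}\mapsto-\epsilon_{1}-\epsilon_{2}-\epsilon_{3}$), the weight of $m$ is
\[
\lambda-2i_{1}\epsilon_{1}-2i_{2}\epsilon_{2}-2i_{3}\epsilon_{3}
-(j_{1}+j_{2}+j_{3}+j_{4})\epsilon_{1}
+(j_{1}+j_{2}-j_{3}-j_{4})\epsilon_{2}
+(j_{1}-j_{2}+j_{3}-j_{4})\epsilon_{3}.
\]
This computation is the only place the structure constants enter, and it is routine.

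Next I would impose $\mathrm{wt}(m)=\beta$ coordinatewise in the basis $\epsilon_{1},\epsilon_{2},\epsilon_{3}$. Comparing the $\epsilon_{k}$-coefficients gives, for fixed $\theta$, the three linear conditions
\[
2i_{1}\equiv\lambda_{1}-\beta_{1}-(j_{1}+j_{2}+j_{3}+j_{4}),\quad
2i_{2}\equiv\lambda_{2}-\beta_{2}+(j_{1}+j_{2}-j_{3}-j_{4}),\quad
2i_{3}\equiv\lambda_{3}-\beta_{3}+(j_{1}-j_{2}+j_{3}-j_{4})
\]
in $\mathbb{F}$. Here I use that in $Z_{\chi}(\lambda)$ the element $f_{k}^{p}$ acts as the scalar $\lambda_{k}^{p}-\lambda_{k}+\chi(f_{k})^{p}$ (a consequence of $\lambda\in\Lambda_{\chi}$ and $f_{k}^{[p]}=0$), so that the exponents $i_{k}$ are only defined modulo $p$ and may be normalized to the range $0\le i_{k}\le p-1$; equivalently, on the fixed spanning set \eqref{11111} the exponent is determined. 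Since $p>2$, $2$ is invertible in $\mathbb{F}=\mathbb{F}_{p}$-span, so each congruence has the unique solution $i_{k}\equiv(\beta;\theta;k)\pmod p$, which is exactly the formula in the statement, and $\beta;\theta;k$ is then the canonical representative in $\{0,\dots,p-1\}$. Hence for each $\theta\in J$ there is precisely one basis vector $w_{\beta}^{\theta}$ of weight $\beta$, and these are linearly independent (being distinct members of the basis \eqref{11111}).

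The remaining point is to check that the right-hand sides $\lambda_{k}-\beta_{k}\pm\cdots$ do indeed lie in the prime field, i.e. that the $(\beta;\theta;k)$ genuinely define residue classes mod $p$: this holds because $\beta$ is a target-weight with integer $\epsilon$-coordinates and $\lambda_{k}^{p}=\lambda_{k}$ forces $\lambda_{k}\in\mathbb{F}_{p}$ after we have already used $\lambda\in\Lambda_{\chi}$ together with $h_{k}^{[p]}=h_{k}$ — actually one only needs $\lambda_{k}-\beta_{k}$ to make sense mod $p$ for the residue-class statement, which is automatic once $\beta_{k}\in\mathbb{Z}$. Putting these together, $Z_{\chi}(\lambda)_{\beta}=\mathrm{span}_{\mathbb{F}}\{w_{\beta}^{\theta}\mid\theta\in J\}$. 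I expect the main obstacle to be purely bookkeeping: correctly recording the $\mathfrak{h}$-weights of all seven generators $f_{1},f_{2},f_{3},y_{1},\dots,y_{4}$ and keeping the sign pattern in the three formulas consistent; there is no conceptual difficulty, and the sets $J_{1},\dots,J_{4}$ introduced before the lemma are not needed here but will organize the parity/weight bookkeeping in the subsequent derivation computations.
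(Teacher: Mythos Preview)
Your argument is essentially the paper's: both compute the $\mathfrak{h}$-weight of a general PBW basis vector $f_{1}^{i_{1}}f_{2}^{i_{2}}f_{3}^{i_{3}}y_{1}^{j_{1}}\cdots y_{4}^{j_{4}}\otimes v$, set it equal to $\beta$ coordinate by coordinate, and solve uniquely for $i_{k}\in\{0,\dots,p-1\}$ (the paper phrases the normalization via $f_{k}^{p}=\chi(f_{k})^{p}$ in $u_{\chi}(\mathfrak{g})$, you phrase it as a congruence). One small slip in your aside: since $f_{k}^{[p]}=0$, the element $f_{k}^{p}$ acts as the scalar $\chi(f_{k})^{p}$, not $\lambda_{k}^{p}-\lambda_{k}+\chi(f_{k})^{p}$ (that relation is $\lambda(h_{k})^{p}-\lambda(h_{k})=\chi(h_{k})^{p}$, for $h_{k}$), but this does not affect the weight computation itself.
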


\begin{proof}
We compute the weight of $f_{1}^{a_{1}}f_{2}^{a_{2}}f_{3}^{a_{3}}y_{1}^{j_{1}}y_{2}^{j_{2}}y_{3}^{j_{3}}y_{4}^{j_{4}}\otimes v$ is
\begin{equation*}
    \begin{aligned}
    &(-2a_{1}-j_{1}-j_{2}-j_{3}-j_{4}+\lambda_{1})\epsilon_{1}+(-2a_{2}+j_{1}+j_{2}-j_{3}-j_{4}+\lambda_{2})\epsilon_{2}\\
&+(-2a_{3}+j_{1}-j_{2}+j_{3}-j_{4}+\lambda_{3})\epsilon_{3}
    \end{aligned}
\end{equation*}
for $a\in \mathbb{F}$ and $j_{1},j_{2},j_{3},j_{4}=0,1$.
Suppose that $w_{\beta}\in Z_{\chi}(\lambda)_{\beta}$ is a weight vector associated with the weight $\beta$.
Then \begin{alignat*}{2}
     &a_{1}=\frac{\lambda_{1}-\beta_{1}-j_{1}-j_{2}-j_{3}-j_{4}}{2},\\
     &a_{2}=\frac{\lambda_{2}-\beta_{2}+j_{1}+j_{2}-j_{3}-j_{4}}{2},\\
     &a_{3}=\frac{\lambda_{3}-\beta_{3}+j_{1}-j_{2}+j_{3}-j_{4}}{2},
   \end{alignat*}

If $0\leq a_{1}, a_{2}, a_{3}\leq p-1$, then $f_{1}^{a_{1}}f_{2}^{a_{2}}f_{3}^{a_{3}}y_{1}^{j_{1}}y_{2}^{j_{2}}y_{3}^{j_{3}}y_{4}^{j_{4}}\otimes v$ is a basis element of $Z_{\chi}(\lambda)_{\beta}$ by (\ref{11111}).

If there exists $k\in\{1, 2, 3\}$ such that $a_{k}>p-1$ or $a_{k}<0$, then replacing $a_{k}-mp$ by $\beta;\theta;k$ and relabelling yields $0\leq \beta;\theta;k\leq p-1$ for $m\in \mathbb{Z}$.
Since $$f_{k}^{a_{k}}=f_{k}^{\beta;\theta;k}f_{k}^{pm}=\chi(f_{k})^{pm}f_{k}^{\beta;\theta;k}.$$
Hence $w_{\beta}$ always consists of $w_{\beta}^{\theta}$ and $\beta;\theta;k$ denote the smallest nonnegative integer in the residue class containing $a_{k}$ modulo $p$ for $k=1, 2, 3$.
\end{proof}

\section{The first cohomology group}
In this section, let $\mathfrak{g}=D(2,1;\alpha)$ and $Z_{\chi}(\lambda)$ be a baby Verma module of $\mathfrak{g}$.
Using the notation introduced in section \ref{2} we obtain the following results.

\begin{lemma}
Suppose that $\varphi\in \Der(\mathfrak{g}, Z_{\chi}(\lambda))_{(0)}\cap\Ider(\mathfrak{g}, Z_{\chi}(\lambda)).$ Then $$\varphi=\sum_{\theta\in J}b^{\theta}\mathfrak{D}_{\omega_{0}^{\theta}},$$
where $b^{\theta}\in \mathbb{F}$.
\end{lemma}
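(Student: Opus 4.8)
The plan is to start from an arbitrary $\varphi\in \Der(\mathfrak{g}, Z_{\chi}(\lambda))_{(0)}\cap\Ider(\mathfrak{g}, Z_{\chi}(\lambda))$ and unwind the two conditions it satisfies. Since $\varphi$ is inner, by definition there is a $\mathbb{Z}_2$-homogeneous element $m\in Z_{\chi}(\lambda)$ with $\varphi=\mathfrak{D}_m$, so $\varphi(x)=(-1)^{|x||m|}xm$ for all $x\in\mathfrak{g}$. Decompose $m=\sum_{\mu}m_\mu$ into its $\mathfrak{h}$-weight components. First I would observe that $\mathfrak{D}_{m_\mu}(\mathfrak{g}_\beta)\subseteq Z_{\chi}(\lambda)_{\beta+\mu}$, so that $\mathfrak{D}_m=\sum_\mu\mathfrak{D}_{m_\mu}$ expresses $\varphi$ as a sum of derivations of distinct weights; since $\varphi$ has weight $0$, all the nonzero-weight pieces must vanish as derivations. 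Thus without loss of generality we may assume $m\in Z_{\chi}(\lambda)_0$, the zero-weight space of the module.

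The second step is to identify $Z_{\chi}(\lambda)_0$ explicitly. This is exactly the $\beta=0$ case of Lemma~\ref{222}: the zero-weight space is spanned by the vectors $w_0^\theta=f_1^{0;\theta;1}f_2^{0;\theta;2}f_3^{0;\theta;3}y_1^{j_1}y_2^{j_2}y_3^{j_3}y_4^{j_4}\otimes v$ as $\theta=(j_1,j_2,j_3,j_4)$ ranges over $J$, where the exponents $0;\theta;k$ are the normalized representatives determined by the formulas in that lemma with $\beta_1=\beta_2=\beta_3=0$. In the notation of the statement, $\omega_0^\theta$ is precisely this $w_0^\theta$. So any zero-weight $m$ can be written $m=\sum_{\theta\in J}b^\theta\,\omega_0^\theta$ with $b^\theta\in\mathbb{F}$, and by linearity of $m\mapsto\mathfrak{D}_m$ we get $\varphi=\mathfrak{D}_m=\sum_{\theta\in J}b^\theta\mathfrak{D}_{\omega_0^\theta}$, which is the claimed form.

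The only subtlety to address carefully is the reduction in the first step: one must check that if $\psi=\mathfrak{D}_{m_\mu}$ with $\mu\neq 0$ is forced to be zero because the total $\varphi$ is of weight zero and the weight-$\mu$ components of $\sum xm_{\mu'}$ over $x\in\mathfrak{g}_\beta$ must cancel for each $\beta$. Here one uses that the weight spaces $Z_{\chi}(\lambda)_{\beta+\mu}$ for distinct $\mu$ are linearly independent summands, so $xm_\mu=0$ for all $x\in\mathfrak{g}$ whenever $\mu\neq 0$; in particular such $m_\mu$ contributes nothing to $\varphi$, and discarding it does not change $\varphi$. I would also note that since $Z_{\chi}(\lambda)$ is $\mathbb{Z}_2$-graded and $\varphi$ need not be homogeneous of a single parity, one decomposes $m$ into its even and odd parts as well; but this is automatic because each $\omega_0^\theta$ is already $\mathbb{Z}_2$-homogeneous (its parity is the parity of $j_1+j_2+j_3+j_4$), so the expansion $m=\sum_\theta b^\theta\omega_0^\theta$ respects the grading with no extra work. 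The main obstacle, such as it is, is purely bookkeeping: making sure the identification of $\omega_0^\theta$ with the basis vectors from Lemma~\ref{222} is stated cleanly, since the exponents $0;\theta;k$ depend on $\theta$ through the reduction modulo $p$; everything else is a direct consequence of the definition of inner derivation together with the weight-space description already established.
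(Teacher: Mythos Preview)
Your argument is correct and is essentially the same as the paper's, just unpacked more explicitly: the paper phrases the reduction as ``$\Ider$ is an $\mathfrak{h}^*$-graded subspace, hence $\Der_{(0)}\cap\Ider=\Ider_{(0)}$'', while you carry this out by hand via the weight decomposition of $m$, and both finish by invoking Lemma~\ref{222} at $\beta=0$. The one imprecision is your opening sentence ``there is a $\mathbb{Z}_2$-homogeneous element $m$ with $\varphi=\mathfrak{D}_m$'': by definition $\Ider$ is only the \emph{span} of such $\mathfrak{D}_m$, so you should start with $\varphi=\mathfrak{D}_{m_{\bar 0}}+\mathfrak{D}_{m_{\bar 1}}$ (equivalently, extend $m\mapsto\mathfrak{D}_m$ linearly to all of $Z_\chi(\lambda)$) before running the weight argument---you acknowledge this at the end, but it should be fixed up front.
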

\begin{proof}
Since $\Der(\mathfrak{g}, Z_{\chi}(\lambda))$ is an $\mathfrak{h}^{*}$-graded Lie superalgebra and $\Ider(\mathfrak{g}, Z_{\chi}(\lambda))$ is an $\mathfrak{h}^{*}$-graded subalgebra of $\Der(\mathfrak{g}, Z_{\chi}(\lambda))$. Thus
$$\Der(\mathfrak{g}, Z_{\chi}(\lambda))_{(0)}\cap\Ider(\mathfrak{g}, Z_{\chi}(\lambda))=\Ider(\mathfrak{g}, Z_{\chi}(\lambda))_{(0)},$$
where $\Ider(\mathfrak{g}, Z_{\chi}(\lambda))_{(0)}=\{\sigma\in \Ider(\mathfrak{g}, Z_{\chi}(\lambda))\mid \sigma(\mathfrak{g}_{\beta})\subseteq Z_{\chi}(\lambda)_{\beta}, \beta\in \mathfrak{h}^{*}\}.$ Therefore, we obtain
$$\Ider(\mathfrak{g}, Z_{\chi}(\lambda))_{(0)}=\{\sum_{\theta\in J}b^{\theta}\mathfrak{D}_{\omega_{0}^{\theta}}\mid b^{\theta}\in \mathbb{F}\},$$ since $\mathfrak{D}_{m}$ is a homogeneous derivation of parity $|m|$ for $m\in Z_{\chi}(\lambda)$.
\end{proof}
We give a description of $\mathrm{Der}(\mathfrak{g}, Z_{\chi}(\lambda))_{(0)}.$
Hereafter, write
\begin{equation*}
    \chi(x)^{p}_{s,t}=\left\{
               \begin{array}{ll}
                 1 & \hbox{$s\neq t$},\\
                 \chi(x)^{p} & \hbox{$s=t$},\\
        \end{array}
       \right.
   \end{equation*}
for $x\in \mathfrak{g}$ and $s,t\in \mathbb{F}$. For $\lambda=\lambda_{1}\epsilon_{1}+\lambda_{2}\epsilon_{2}+\lambda_{3}\epsilon_{3}\in \mathfrak{h}^{*}$, we write in short $\lambda=(\lambda_{1}, \lambda_{2}, \lambda_{3})$.

\begin{lemma}\label{3.4}
Suppose that $\varphi\in \mathrm{Der}(\mathfrak{g}, Z_{\chi}(\lambda))_{(0)}.$ Then we have
\begin{equation*}
    \varphi(h_{i})=\left\{
               \begin{array}{ll}
                 a_{i}w_{0}^{(1,1,1,1)} & \hbox{if $\lambda=(2p+2, 2p-2, 2p-2)$ and $\chi(f_{i})^{p}=0$},\\
                 0 & \hbox{otherwise},\\
        \end{array}
       \right.
   \end{equation*}
where $a_{i}\in \mathbb{F}$ and $i=1, 2, 3.$
\end{lemma}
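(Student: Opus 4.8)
The plan is to exploit the $0$-weight condition together with the derivation identity applied to the defining relations of $\mathfrak{g}_{\bar 0}$. Since $h_i \in \mathfrak{g}_0$, the $0$-weight hypothesis forces $\varphi(h_i) \in Z_\chi(\lambda)_0$, so by Lemma~\ref{222} we may write $\varphi(h_i) = \sum_{\theta\in J} c_i^\theta\, w_0^\theta$ for scalars $c_i^\theta\in\mathbb{F}$. The first step is to pin down which $w_0^\theta$ can actually appear. Applying \eqref{222222} to $[h_i,h_j]=0$ gives $h_i\varphi(h_j) = h_j\varphi(h_i)$, and since each $w_0^\theta$ is a weight vector of weight $0$ this is automatic, so no information comes from there. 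Instead I would use the relations $[h_i,e_i]=2e_i$ and $[e_i,f_i]=h_i$ from \eqref{gsss}: writing $\varphi(e_i)\in Z_\chi(\lambda)_{2\epsilon_i}$ and $\varphi(f_i)\in Z_\chi(\lambda)_{-2\epsilon_i}$, the identity $\varphi(h_i)=\varphi([e_i,f_i])=e_i\varphi(f_i)+f_i\varphi(e_i)$ (up to the sign governed by parities, which is even here) expresses $\varphi(h_i)$ in terms of the lower-weight data, while $2\varphi(e_i)=\varphi([h_i,e_i])=h_i\varphi(e_i)-e_i\varphi(h_i)$ and similarly for $f_i$ give constraints linking $\varphi(h_i)$, $\varphi(e_i)$, $\varphi(f_i)$.

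Next I would convert these into linear conditions on the coefficients $c_i^\theta$. The key observation is that $h_i$ acts diagonally on each basis vector \eqref{11111}, and the eigenvalue on $w_0^\theta$ is $0$ by construction — but $e_i\varphi(h_i)$ and $f_i\varphi(h_i)$ are generically nonzero, so the relation $2\varphi(e_i) = h_i\varphi(e_i) - e_i\varphi(h_i)$ becomes, after projecting onto weight spaces, an equation forcing $e_i\varphi(h_i)$ to lie in a tightly constrained subspace of $Z_\chi(\lambda)_{2\epsilon_i}$. Working out the $\mathfrak{sl}(2)_i$-action on the PBW basis \eqref{11111} explicitly — essentially $e_i$ lowers the $f_i$-exponent and $f_i$ raises it, subject to the $\chi$-twisted $p$-th power truncation $f_i^p=\chi(f_i)^p$ — I expect to find that $e_i\varphi(h_i)=0$ is forced except on a narrow set of $\theta$'s, and then the remaining closed conditions (from $[h_i,f_i]=-2f_i$ and, crucially, from cross-relations between different copies $\mathrm{sl}(2)_j$, which are coupled through $\mathfrak{g}_{\bar 1}$ via brackets like $[x_1,y_4]=-(1+\alpha)h_1+h_2+\alpha h_3$) pick out exactly $\theta=(1,1,1,1)$ and exactly the weight $\lambda=(2p+2,2p-2,2p-2)$ with $\chi(f_i)^p=0$. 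The role of that specific $\lambda$ is that for it the exponents $\beta;\theta;k$ in Lemma~\ref{222} with $\theta=(1,1,1,1)$ land at the "boundary" ($0$ or $p-1$), which is precisely where the truncation $f_k^p=\chi(f_k)^p$ lets $e_k$ or $f_k$ annihilate $w_0^{(1,1,1,1)}$ when $\chi(f_k)^p=0$.

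The main obstacle I anticipate is the bookkeeping: one must simultaneously track the action of all three $\mathfrak{sl}(2)_i$ and the odd elements $x_p,y_p$ on the seventeen-ish relevant basis vectors of $Z_\chi(\lambda)$ in the target-weight spaces, keep the $\chi$-twist and the $p$-truncation straight, and correctly handle the Koszul signs in \eqref{222222}. I would organize this by first treating a single $\mathfrak{sl}(2)_i$ in isolation to show $\varphi(h_i)$ is a multiple of $w_0^{(1,1,1,1)}$ plus possibly other boundary terms, then use the odd brackets $[x_j,y_k]$ (which express $h_1,h_2,h_3$ as brackets of odd elements) to propagate the constraint across the three copies and kill the extra terms, arriving at the stated dichotomy. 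The verification that $\lambda=(2p+2,2p-2,2p-2)$ is the unique weight surviving all three families of constraints is where the computation is genuinely rigid rather than routine, and I would present that step in full detail.
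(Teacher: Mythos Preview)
Your overall strategy and your intuition for why $\theta=(1,1,1,1)$ and $\lambda=(2p+2,2p-2,2p-2)$ are singled out are both correct, but your choice of brackets is more circuitous than the paper's. The key observation---which you extract for $z=e_i,f_i$ but do not state in full generality---is that for \emph{any} root vector $z\in\mathfrak{g}_\beta$ with $\beta\neq 0$, feeding $[h_i,z]=\beta(h_i)z$ into \eqref{222222} and using $h_i\varphi(z)=\beta(h_i)\varphi(z)$ (the $0$-weight hypothesis) gives $z\varphi(h_i)=0$ directly, in all parities, \emph{without} needing to know $\varphi(z)$. The paper exploits exactly this: it first takes $z=f_1$ (equation \eqref{33333}), which already forces $a_i^\theta=0$ unless $0;\theta;1=p-1$ and $\chi(f_1)^p=0$; the surviving boundary coefficients are then eliminated by taking $z=y_j$ ($j=1,2,3,4$) and $z=e_1$, still brackets of the form $[h_i,z]$. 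The exceptional $\lambda$ is handled by the same device, ranging $z$ over all basis elements.

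By contrast, your proposed cross-relations---$[e_i,f_i]=h_i$ and the odd--odd brackets $[x_j,y_k]=\sum_l c_lh_l$---drag the undetermined values $\varphi(e_i),\varphi(f_i),\varphi(x_j),\varphi(y_k)$ into the equations, entangling this lemma with material that properly belongs to Propositions~\ref{33335} and~\ref{3.5}. The argument can still be closed that way, but the bookkeeping is much heavier and you risk circularity. The simpler fix is to stay with brackets $[h_i,z]$ throughout: the ``coupling through $\mathfrak{g}_{\bar 1}$'' you want comes not from $[x_j,y_k]$ but from $[h_i,y_j]$ and $[h_i,x_j]$, which yield $y_j\varphi(h_i)=0$ and $x_j\varphi(h_i)=0$ cleanly.
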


\begin{proof}
Note that $\varphi(h_{i})\in Z_{\chi}(\lambda)_{0}.$ By Lemma \ref{222}, suppose that $\varphi(h_{i})=\sum_{\theta}a_{i}^{\theta}w_{0}^{\theta}$ and $\varphi(f_{1})=\sum_{\theta}a_{-2\epsilon_{1}}^{\theta}w_{-2\epsilon_{1}}^{\theta},$ where $a_{i}^{\theta}, a_{-2\epsilon_{i}}^{\theta}\in\mathbb{F}$ and $\theta\in J$.
By the definition of $\beta;\theta;k$ in Lemma \ref{222}, $k=1, 2, 3,$ we have
\begin{equation*}
 0;\theta;1=\frac{\lambda_{1}+j_{1}-j_{2}+j_{3}-j_{4}}{2}.
\end{equation*}

(1) Suppose that $\lambda\in \mathfrak{h}^{*}$ and $\lambda\neq(2p+2, 2p-2, 2p-2)$.
By substituting $x=h_{i}$ and $y=f_{1}$ in (\ref{222222}), we have
\begin{equation}\label{33333}
\begin{aligned}
    \delta_{i,1}\varphi(-2f_{1})&=\varphi([h_{i}, f_{1}])=h_{i}\varphi(f_{1})-f_{1}\varphi(h_{i})\\
    &=h_{i}\sum_{\theta}a_{-2\epsilon_{1}}^{\theta}w_{-2\epsilon_{1}}^{\theta}-f_{1}\sum_{\theta}a_{i}^{\theta}w_{0}^{\theta}\\
    &=-\delta_{i,1}2\varphi(f_{1})-\sum_{\theta}\chi(f_{1})^{p}_{(0;\theta;1), p-1}a_{i}^{\theta}f_{1}w_{0}^{\theta}.
    \end{aligned}
   \end{equation}
Since $\delta_{i,1}\varphi(-2f_{1})=-\delta_{i,1}2\varphi(f_{1})$, then $\sum_{\theta}\chi(f_{1})^{p}_{(0;\theta;1), p-1}a_{i}^{\theta}f_{1}w_{0}^{\theta}=0$.

\textbf{Case 1}. If $0;\theta;1\neq p-1$ for all $\theta\in J$, then
$$\sum_{\theta}\chi(f_{1})^{p}_{(0;\theta;1), p-1}a_{i}^{\theta}f_{1}w_{0}^{\theta}=\sum_{\theta}a_{i}^{\theta}f_{1}w_{0}^{\theta}.$$
Since $f_{1}w_{0}^{\theta}=w_{-2\epsilon_{1}}^{\theta}$ and $w_{-2\epsilon_{1}}^{\theta}$ are basis elements of $Z_{\chi}(\lambda),$ we have $a_{i}^{\theta}=0.$ Hence $\varphi(h_{i})=0,$ $i=1, 2, 3$.

\textbf{Case 2}. If there exists $J'\subseteq J$ such that $0;\theta';1=p-1,$ $\theta'\in J'$, then $a_{i}^{\theta}=0$ and $\sum_{\theta'\in J'}\chi(f_{1})^{p}a_{i}^{\theta'}w_{-2\epsilon_{1}}^{\theta'}=0$ by (\ref{33333}), where $\theta\in J\backslash J'$.
If $\chi(f_{1})^{p}\neq0$, then $a_{i}^{\theta'}=0$, that is, $\varphi(h_{i})=0,$ $i=1, 2, 3$.
If $\chi(f_{1})^{p}=0$, then substituting $x=h_{i}$ and $y=y_{j}$ or $e_{1}$ in (\ref{222222}), where $j=1, 2, 3, 4$.
A direct computation shows that $\varphi(h_{i})=0,$ $i=1, 2, 3$.

(2) Suppose that $\lambda=(2p+2, 2p-2, 2p-2)$. Substituting $h_{i}$ for $x$ and all basis elements for $y$ in (\ref{222222}). If there exists $k\in \{1,2,3\}$ such that $\chi(f_{k})^{p}\neq0$, then we obtain that $a_{i}^{\theta}=0$, $\theta\in J$, that is, $\varphi(h_{i})=0,$ $i=1, 2, 3$.
If $\chi(f_{i})^{p}=0,$ $i=1,2,3$, then we have $a_{i}^{\theta}=0$, where $\theta\in J\backslash\{(1,1,1,1)\}$. Hence $\varphi(h_{i})=a_{i}^{(1, 1, 1, 1)}w_{0}^{(1, 1, 1, 1)}$, where $a_{i}^{(1, 1, 1, 1)}\in \mathbb{F}$ and $i=1, 2, 3$. Let $a_{i}$ denote $a_{i}^{(1,1,1,1)}$ and we obtain the desired result.
\end{proof}
Hereafter, we denote $a_{\beta}^{(1,1,1,1)}$ by $a_{\beta}$ for all $\beta\in \mathfrak{h}^{*}$.
\begin{lemma}\label{33336}
Suppose that $\varphi(f_{i})=\sum_{\theta}a_{-2\epsilon_{i}}^{\theta}w_{-2\epsilon_{i}}^{\theta},$ where $a_{-2\epsilon_{i}}^{\theta}\in \mathbb{F},$ $i=1, 2, 3$ and $\theta\in J.$ Then
$$\chi(f_{l})^{p}_{(-2\epsilon_{k};\theta;l),p-1}a_{-2\epsilon_{k}}^{\theta}=\chi(f_{k})^{p}_{(-2\epsilon_{l};\theta;k),p-1}a_{-2\epsilon_{l}}^{\theta},$$
where $k,l\in\{1, 2, 3\}$ and $k\neq l$.

\end{lemma}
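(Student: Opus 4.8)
The plan is to exploit the relation $[f_k, f_l] = 0$, which holds because $f_k$ and $f_l$ lie in distinct copies of $\mathrm{sl}(2)$ inside $\mathfrak{g}_{\bar{0}}$. Both $f_k$ and $f_l$ are even, so feeding $x = f_k$, $y = f_l$ into the derivation identity (\ref{222222}) collapses all the signs and gives $0 = \varphi([f_k, f_l]) = f_k\varphi(f_l) - f_l\varphi(f_k)$, i.e.\ $f_k\varphi(f_l) = f_l\varphi(f_k)$. I would then substitute the given expansions $\varphi(f_l) = \sum_{\theta} a_{-2\epsilon_l}^{\theta} w_{-2\epsilon_l}^{\theta}$ and $\varphi(f_k) = \sum_{\theta} a_{-2\epsilon_k}^{\theta} w_{-2\epsilon_k}^{\theta}$, so that everything reduces to computing $f_k w_{-2\epsilon_l}^{\theta}$ and $f_l w_{-2\epsilon_k}^{\theta}$ in $Z_\chi(\lambda)$ and matching coefficients.

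For those computations I would use that $f_1, f_2, f_3$ commute pairwise, hence left multiplication of the PBW monomial $w_{-2\epsilon_l}^{\theta} = f_1^{-2\epsilon_l;\theta;1}f_2^{-2\epsilon_l;\theta;2}f_3^{-2\epsilon_l;\theta;3}y_1^{j_1}y_2^{j_2}y_3^{j_3}y_4^{j_4}\otimes v$ by $f_k$ simply raises the $f_k$-exponent by one and never disturbs the $y$-part; when that exponent reaches $p$ one replaces $f_k^p$ by $\chi(f_k)^p$ in $u_\chi(\mathfrak{g})$, using $f_k^{[p]} = 0$. Since $-2\epsilon_l;\theta;k = 0;\theta;k$ and $-2\epsilon_l;\theta;l = 0;\theta;l + 1$ before reduction modulo $p$ (and symmetrically with $k$ and $l$ swapped), a short case check on whether $0;\theta;k$ or $0;\theta;l$ equals $p-1$ yields
$$f_k w_{-2\epsilon_l}^{\theta} = \chi(f_k)^p_{(-2\epsilon_l;\theta;k),\,p-1}\, u^{\theta},\qquad f_l w_{-2\epsilon_k}^{\theta} = \chi(f_l)^p_{(-2\epsilon_k;\theta;l),\,p-1}\, u^{\theta},$$
where $u^{\theta}$ is one and the same PBW basis vector of $Z_\chi(\lambda)$ (with $y$-part $\theta$, and $f$-exponents the reduced values of $0;\theta;k+1$, $0;\theta;l+1$ and $0;\theta;t$, $t$ being the remaining index). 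The whole point is that both products land on this common $u^{\theta}$, the $\chi$-factors being exactly the corrections forced when an exponent wraps around.

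To finish, since each $u^{\theta}$ has $y$-part exactly $\theta$, the family $\{u^{\theta} \mid \theta \in J\}$ is linearly independent in $Z_\chi(\lambda)$. Plugging the two displayed identities into $f_k\varphi(f_l) = f_l\varphi(f_k)$ and comparing the coefficient of $u^{\theta}$ gives $\chi(f_k)^p_{(-2\epsilon_l;\theta;k),p-1}\,a_{-2\epsilon_l}^{\theta} = \chi(f_l)^p_{(-2\epsilon_k;\theta;l),p-1}\,a_{-2\epsilon_k}^{\theta}$ for every $\theta \in J$, which is the assertion. The only real work is the middle step: verifying that $f_k w_{-2\epsilon_l}^{\theta}$ and $f_l w_{-2\epsilon_k}^{\theta}$ are genuinely proportional to the same monomial and that the proportionality constants are precisely the stated $\chi$-symbols — a routine but slightly fiddly check over the cases where $0;\theta;k$ or $0;\theta;l$ hits $p-1$.
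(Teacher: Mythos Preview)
Your proposal is correct and follows essentially the same route as the paper: apply the derivation identity to $[f_k,f_l]=0$, expand $f_k\varphi(f_l)-f_l\varphi(f_k)$ in terms of the PBW basis, and read off the relation from linear independence. Your common vector $u^{\theta}$ is precisely the paper's $w_{-2\epsilon_k-2\epsilon_l}^{\theta}$; the paper states the key computation in one line while you spell out the exponent bookkeeping and the wrap-around via $f_k^p=\chi(f_k)^p$, but the argument is the same.
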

\begin{proof}
Suppose that $k,l\in\{1, 2, 3\}$ and $k\neq l$. By substituting $x=f_{k}$ and $y=f_{l}$ in (\ref{222222}), we have
\begin{equation}\label{3332}
\begin{aligned}
0=\varphi([f_{k}, f_{l}])&=f_{k}\varphi(f_{l})-f_{l}\varphi(f_{k})\\
&=\sum_{\theta}(\chi(f_{k})^{p}_{(-2\epsilon_{l};\theta;k),p-1}a_{-2\epsilon_{l}}^{\theta}
-\chi(f_{l})^{p}_{(-2\epsilon_{k};\theta;l),p-1}a_{-2\epsilon_{k}}^{\theta})w_{-2\epsilon_{k}-2\epsilon_{l}}^{\theta}.
\end{aligned}
\end{equation}
Since $w_{-2\epsilon_{k}-2\epsilon_{l}}^{\theta}$ is a basis element of $Z_{\chi}(\lambda)$, then (\ref{3332}) yields
$$\chi(f_{l})^{p}_{(-2\epsilon_{k};\theta;l),p-1}a_{-2\epsilon_{k}}^{\theta}
=\chi(f_{k})^{p}_{(-2\epsilon_{l};\theta;k),p-1}a_{-2\epsilon_{l}}^{\theta}.$$
\end{proof}

\begin{proposition}\label{33335}
Suppose that $\varphi\in \Der(\mathfrak{g}, Z_{\chi}(\lambda))_{(0)}.$ If $\chi(f_{1})^{p}\neq0$, then $$\varphi\in\Ider(\mathfrak{g}, Z_{\chi}(\lambda))_{(0)}.$$
\end{proposition}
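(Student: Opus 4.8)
The plan is to exploit the fact that, under the hypothesis, $f_{1}$ acts invertibly on the baby Verma module. Since $f_{1}^{[p]}=0$, the operator by which $f_{1}$ acts on $Z_{\chi}(\lambda)$ satisfies $f_{1}^{p}=\chi(f_{1})^{p}\,\mathrm{id}$, so $\chi(f_{1})^{p}\neq 0$ forces $f_{1}$ to act invertibly, with inverse $\chi(f_{1})^{-p}f_{1}^{p-1}$. We may assume $\varphi$ is $\mathbb{Z}_{2}$-homogeneous, treating its even and odd components separately. Since $\varphi\in\Der(\mathfrak{g},Z_{\chi}(\lambda))_{(0)}$, we have $\varphi(f_{1})\in Z_{\chi}(\lambda)_{-2\epsilon_{1}}$, and the element
$$m_{0}:=\chi(f_{1})^{-p}f_{1}^{p-1}\varphi(f_{1})=f_{1}^{-1}\varphi(f_{1})$$
lies in $Z_{\chi}(\lambda)_{0}$, because $\varphi(f_{1})$ has weight $-2\epsilon_{1}$ and multiplication by $f_{1}^{-1}$ raises weight by $2\epsilon_{1}$; moreover $m_{0}$ has parity $|\varphi|$. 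Hence $\mathfrak{D}_{m_{0}}\in\Ider(\mathfrak{g},Z_{\chi}(\lambda))_{(0)}$, and it suffices to prove that $\psi:=\varphi-\mathfrak{D}_{m_{0}}$ vanishes.

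By construction $\psi\in\Der(\mathfrak{g},Z_{\chi}(\lambda))_{(0)}$ is homogeneous of parity $|\varphi|$, and $\psi(f_{1})=\varphi(f_{1})-f_{1}m_{0}=0$. Since $|f_{1}|=\bar{0}$, identity (\ref{222222}) with $y=f_{1}$ loses its $\psi(f_{1})$-term and reduces to
$$\psi([f_{1},x])=f_{1}\psi(x)\qquad\text{for all }x\in\mathfrak{g}.$$
Iterating, $\psi\big((\mathrm{ad}\,f_{1})^{n}x\big)=f_{1}^{\,n}\,\psi(x)$ for every $n\geq 0$ and every $x\in\mathfrak{g}$. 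Now $f_{1}$ acts trivially on $\mathrm{sl}(2)_{2}\oplus\mathrm{sl}(2)_{3}$, nilpotently on $\mathrm{sl}(2)_{1}$, and with square zero on $V_{1}\boxtimes V_{2}\boxtimes V_{3}$ (it only affects the first tensor factor), so $(\mathrm{ad}\,f_{1})^{N}=0$ on $\mathfrak{g}$ for some $N$ (in fact $N=3$ works). Taking $n=N$ gives $f_{1}^{\,N}\psi(x)=0$ for all $x\in\mathfrak{g}$, and since $f_{1}^{\,N}$ is invertible on $Z_{\chi}(\lambda)$ we conclude $\psi=0$, that is, $\varphi=\mathfrak{D}_{m_{0}}\in\Ider(\mathfrak{g},Z_{\chi}(\lambda))_{(0)}$.

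The essential input is the invertibility of $f_{1}$ on $Z_{\chi}(\lambda)$, which is exactly what the hypothesis $\chi(f_{1})^{p}\neq 0$ provides; the rest of the argument is forced. Accordingly, the only steps requiring care are bookkeeping ones: checking that $m_{0}$ really has weight $0$ and parity $|\varphi|$, so that $\mathfrak{D}_{m_{0}}$ is a genuine $0$-weight inner derivation, and verifying that $\mathrm{ad}\,f_{1}$ is nilpotent on $\mathfrak{g}$ — both of which follow directly from the multiplication table and the $p$-mapping of Section \ref{2}. Note that this route does not even need the explicit description of $\varphi(h_{i})$ from Lemma \ref{3.4} or the relations of Lemma \ref{33336}; the same argument applies verbatim with $f_{2}$ or $f_{3}$ in place of $f_{1}$, giving the analogous statement when $\chi(f_{2})^{p}\neq 0$ or $\chi(f_{3})^{p}\neq 0$.
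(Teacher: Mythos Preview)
Your proof is correct and considerably more conceptual than the paper's. The paper proceeds by explicit case analysis: it splits $\varphi$ into even and odd parts, expands $\varphi$ in the weight-vector basis $w_{\beta}^{\theta}$ of Lemma~\ref{222}, and then works through the possible values of $\lambda_{1}$ (namely $2p+4,\,2p\pm 2,\,2p$ for the even part and $2p\pm 1,\,2p\pm 3$ for the odd part) to show in each case, via the derivation identity and Lemma~\ref{33336}, that every coefficient $a_{\beta}^{\theta}$ is a fixed linear combination of the $a_{-2\epsilon_{1}}^{\theta}$, whence $\varphi$ equals a combination of the $\mathfrak{D}_{w_{0}^{\theta}}$.

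Your route bypasses all of this: the single observation that $f_{1}^{p}=\chi(f_{1})^{p}$ in $u_{\chi}(\mathfrak{g})$ makes $f_{1}$ invertible on $Z_{\chi}(\lambda)$, so after subtracting the inner derivation $\mathfrak{D}_{m_{0}}$ with $m_{0}=\chi(f_{1})^{-p}f_{1}^{p-1}\varphi(f_{1})\in Z_{\chi}(\lambda)_{0}$ you may assume $\psi(f_{1})=0$; then $\psi\big((\mathrm{ad}\,f_{1})^{n}x\big)=f_{1}^{\,n}\psi(x)$ together with $(\mathrm{ad}\,f_{1})^{3}=0$ on $\mathfrak{g}$ and the invertibility of $f_{1}^{3}$ forces $\psi=0$. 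This argument uses nothing special about $D(2,1;\alpha)$ beyond the nilpotency of $\mathrm{ad}\,f_{1}$ and $f_{1}^{[p]}=0$, so it generalises immediately (as you note) to $f_{2}$, $f_{3}$, and indeed to any restricted Lie superalgebra with such an element. The price is that the paper's computational framework, though heavier here, is exactly what is needed in Proposition~\ref{3.5} when $\chi(f_{1})^{p}=0$ and your invertibility trick is unavailable; the paper's proof of Proposition~\ref{33335} is effectively a warm-up for that harder case.
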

\begin{proof}
Since $\mathrm{Der}(\mathfrak{g}, Z_{\chi}(\lambda))_{(0)}$ is a $\mathbb{Z}_{2}$-graded vector space, we have $\varphi=\varphi_{\bar{0}}+\varphi_{\bar{1}},$ where $\varphi_{\bar{i}}\in \Der(\mathfrak{g}, Z_{\chi}(\lambda))_{(0), \bar{i}}$ and
$$\Der(\mathfrak{g}, Z_{\chi}(\lambda))_{(0), \bar{i}}:=\Der(\mathfrak{g}, Z_{\chi}(\lambda))_{(0)}\cap\Der(\mathfrak{g}, Z_{\chi}(\lambda))_{\bar{i}}$$
for $\bar{i}\in \mathbb{Z}_{2}$.
Let $x$ be a weight vector and $\beta_{x}$ the weight associated with $x$.
By Lemma \ref{222}, we assume that
\begin{equation}\label{33337}
\begin{aligned}
    \varphi_{\bar{0}}(x_{\bar{0}})&=\sum_{\theta\in J_{1}}a_{\beta_{x}}^{\theta}w_{\beta_{x}}^{\theta}, &\varphi_{\bar{1}}(x_{\bar{0}})=\sum_{\theta\in J_{3}}a_{\beta_{x}}^{\theta}w_{\beta_{x}}^{\theta},\\
\varphi_{\bar{0}}(x_{\bar{1}})&=\sum_{\theta\in J_{3}}a_{\beta_{x}}^{\theta}w_{\beta_{x}}^{\theta}, &\varphi_{\bar{1}}(x_{\bar{1}})=\sum_{\theta\in J_{1}}a_{\beta_{x}}^{\theta}w_{\beta_{x}}^{\theta}.
\end{aligned}
\end{equation}

\textbf{Case 1}. Suppose that $\beta;\theta;1\neq p-1$ for all $\beta\in\{\pm2\epsilon_{i}, \pm\epsilon_{1}\pm\epsilon_{2}\pm\epsilon_{3}\}$.
Replacing $x$ by $f_{1}$ and $y$ by other basis elements in (\ref{222222}),
we obtain that each coefficient $a_{\beta_{x}}^{\theta}$ is a linear combination of the elements of $\{a_{-2\epsilon_{1}}^{\theta'}\mid \theta'\in J\}$ by Lemma \ref{33336}. It follows that $$\varphi=\sum_{\theta\in J}a_{-2\epsilon_{1}}^{\theta}\mathfrak{D}_{\omega_{0}^{\theta}}.$$ Moreover, $\varphi\in \Ider(\mathfrak{g}, Z_{\chi}(\lambda))_{(0)}$.

\textbf{Case 2}. Suppose that there exists $\theta'\in J$ or $\beta'\in \{\pm2\epsilon_{i}, \pm\epsilon_{1}\pm\epsilon_{2}\pm\epsilon_{3}\}$ such that $\beta';\theta';1=p-1$. We first verify that $\varphi_{\bar{0}}\in \Ider(\mathfrak{g}, Z_{\chi}(\lambda))_{(0)}$. There are four possibilities for $\lambda_{1}$ which give rise to $\beta';\theta';1=p-1$, where $\theta'\in J_{1}$.

(1) If $\lambda_{1}=2p+4$, then $2\epsilon_{1};(1,1,1,1);1=p-1$. The computation for the equations
\begin{equation}\label{gongshi2}
\varphi_{\bar{0}}([e_{1}, x_{4}])=0, \varphi_{\bar{0}}([e_{1}, f_{2}])=0, \varphi_{\bar{0}}([e_{1}, f_{3}])=0,
\end{equation}
shows that $a_{2\epsilon_{1}}=0$. Replacing $x$ by $f_{1}$ and $y$ by other basis elements in (\ref{222222}), we obtain that $$\varphi_{\bar{0}}=\sum_{\theta\in J_{1}}a_{-2\epsilon_{1}}^{\theta}\mathfrak{D}_{\omega_{0}^{\theta}}$$ by Lemma \ref{33336}. In other words, $\varphi_{\bar{0}}\in \Ider(\mathfrak{g}, Z_{\chi}(\lambda))_{(0),\bar{0}}$.

(2) If $\lambda_{1}=2p+2,$ then
\begin{equation}\label{gongshi3}
  2\epsilon_{1};\theta_{1};1=\pm2\epsilon_{k};(1,1,1,1);1=\epsilon_{1}\pm\epsilon_{2}\pm\epsilon_{3};\theta_{2};1=p-1,
\end{equation}
where $\theta_{1}\in J_{2}$, $\theta_{2}\in J_{4}$ and $k=2,3$.
Replacing $x$ by $f_{1}$ and $y$ by other basis elements in (\ref{222222}) and Lemma \ref{33336}, we obtain that $$\varphi_{\bar{0}}=\sum_{\theta\in J_{1}\backslash\{(1,1,1,1)\}}a_{-2\epsilon_{1}}^{\theta}\mathfrak{D}_{\omega_{0}^{\theta}}+\frac{1}{\chi(f_{1})^{p}}a_{-2\epsilon_{1}}\mathfrak{D}_{\omega_{0}^{(1,1,1,1)}}.$$
(3) If $\lambda_{1}=2p,$ then
\begin{equation}\label{gongshi6}
    \begin{aligned}
    p-1&=2\epsilon_{1};(0,0,0,0);1=-2\epsilon_{1};(1,1,1,1);1=\pm\epsilon_{k};\theta_{1};1\\
       &=\epsilon_{1}\pm\epsilon_{2}\pm\epsilon_{3};\theta_{2};1=-\epsilon_{1}\pm\epsilon_{2}\pm\epsilon_{3};\theta_{3};1,
    \end{aligned}
\end{equation}
where $k=2,3$, $\theta_{1}\in J_{2}$, $\theta_{2}\in J_{3}\backslash J_{4}$ and $\theta_{3}\in J_{4}$.
Replacing $x$ by $f_{1}$ and $y$ by other basis elements in (\ref{222222}) and Lemma \ref{33336}, we obtain that
$$\varphi_{\bar{0}}=\sum_{\theta\in J_{2}}\frac{1}{\chi(f_{1})^{p}}a_{-2\epsilon_{1}}^{\theta}\mathfrak{D}_{\omega_{0}^{\theta}}
+a_{-2\epsilon_{1}}^{(0,0,0,0)}\mathfrak{D}_{\omega_{0}^{(0,0,0,0)}}+a_{-2\epsilon_{1}}\mathfrak{D}_{\omega_{0}^{(1,1,1,1)}}.$$

(4) If $\lambda_{1}=2p-2$, then
\begin{equation}\label{gongshi5}
     -2\epsilon_{1};\theta_{1};1=\pm2\epsilon_{k};(0,0,0,0);1=-\epsilon_{1}\pm\epsilon_{2}\pm\epsilon_{3};\theta_{2};1=p-1,
\end{equation}
where $\theta_{1}\in J_{2},$ $k=2,3$ and $\theta_{2}\in J_{3}\backslash J_{4}$.
Replacing $x$ by $f_{1}$ and $y$ by other basis elements in (\ref{222222}) and Lemma \ref{33336}, we obtain that
$$\varphi_{\bar{0}}=\sum_{\theta\in J_{1}\backslash \{(0,0,0,0)\}}a_{-2\epsilon_{1}}^{\theta}\mathfrak{D}_{\omega_{0}^{\theta}}
+\frac{1}{\chi(f_{1})^{p}}a_{-2\epsilon_{1}}^{(0,0,0,0)}\mathfrak{D}_{\omega_{0}^{(0,0,0,0)}}.$$
Therefore, $\varphi_{\bar{0}}\in \Ider(\mathfrak{g}, Z_{\chi}(\lambda))_{(0),\bar{0}}$. Then we check that $\varphi_{\bar{1}}\in \Ider(\mathfrak{g}, Z_{\chi}(\lambda))_{(0),\bar{1}}$. There are four possibilities for $\lambda_{1}$ which give rise to $\beta';\theta';1=p-1$, where $\theta'\in J_{3}$.

(1) If $\lambda_{1}=2p+3,$ then $2\epsilon_{1};\theta_{1};1=\epsilon_{1}\pm\epsilon_{2}\pm\epsilon_{3};(1,1,1,1);1=p-1$, where $\theta_{1}\in J_{4}$.
Replacing $x$ by $f_{1}$ and $y$ by $e_{1}$ in (\ref{222222}), we obtain that $\chi(f_{1})^{p}a_{2\epsilon_{1}}^{\theta_{1}}=0$. Since $\chi(f_{1})^{p}\neq0$, we obtain $a_{2\epsilon_{1}}^{\theta_{1}}=0$ for $\theta_{1}\in J_{4}$. This yields $a_{\epsilon_{1}\pm\epsilon_{2}\pm\epsilon_{3}}=0$ by computing the following equations
\begin{equation*}
    \begin{aligned}
\varphi_{\bar{1}}([e_{1},x_{4}])&=0, & \varphi_{\bar{1}}([e_{2}, x_{4}])&=\varphi_{\bar{1}}(x_{2}),\\
  \varphi_{\bar{1}}([e_{3}, x_{4}])&=\varphi_{\bar{1}}(x_{3}), & \varphi_{\bar{1}}([e_{3}, x_{2}])&=\varphi_{\bar{1}}(x_{1}).
    \end{aligned}
\end{equation*}
Replacing $x$ by $f_{1}$ and $y$ by other basis elements in (\ref{222222}), we obtain that $$\varphi_{\bar{1}}=\sum_{\theta\in J_{3}}a_{-2\epsilon_{1}}^{\theta}\mathfrak{D}_{\omega_{0}^{\theta}}$$ by Lemma \ref{33336}. In other words, $\varphi_{\bar{1}}\in \Ider(\mathfrak{g}, Z_{\chi}(\lambda))_{(0),\bar{1}}$.

(2) If $\lambda_{1}=2p+1$, then
\begin{equation}\label{gongshi8}
    \begin{aligned}
    p-1&=2\epsilon_{1};\theta_{1};1=\epsilon_{1}\pm\epsilon_{2}\pm\epsilon_{3};\theta_{3};1\\
    &=\pm\epsilon_{k};\theta_{2};1=-\epsilon_{1}\pm\epsilon_{2}\pm\epsilon_{3};(1,1,1,1);1
    \end{aligned}
\end{equation}
where $\theta_{1}\in J_{3}\backslash J_{4}$, $\theta_{2}\in J_{4}$, $\theta_{3}\in J_{2}$ and $k=2,3$.
Replacing $x$ by $f_{1}$ and $y$ by other basis elements in (\ref{222222}), we obtain that
$$\varphi_{\bar{1}}=\sum_{\theta\in J_{3}\backslash J_{4}}a_{-2\epsilon_{1}}^{\theta}\mathfrak{D}_{\omega_{0}^{\theta}}+\sum_{\theta\in J_{4}}\frac{1}{\chi(f_{1})^{p}}a_{-2\epsilon_{1}}^{\theta}\mathfrak{D}_{\omega_{0}^{\theta}}.$$

(3) If $\lambda_{1}=2p-1$, then
\begin{equation}\label{gongshi7}
    \begin{aligned}
    p-1&=-2\epsilon_{1};\theta_{1};1=\epsilon_{1}\pm\epsilon_{2}\pm\epsilon_{3};(0,0,0,0);1\\
    &=\pm2\epsilon_{k};\theta_{2};1=-\epsilon_{1}\pm\epsilon_{2}\pm\epsilon_{3};\theta_{3};1
    \end{aligned}
\end{equation}
where $\theta_{1}\in J_{4}$, $\theta_{2}\in J_{3}\backslash J_{4}$, $\theta_{3}\in J_{2}$ and $k=2,3$.
Replacing $x$ by $f_{1}$ and $y$ by other basis elements in (\ref{222222}), we obtain that
$$\varphi_{\bar{1}}=\sum_{\theta\in J_{3}\backslash J_{4}}\frac{1}{\chi(f_{1})^{p}}a_{-2\epsilon_{1}}^{\theta}\mathfrak{D}_{\omega_{0}^{\theta}}+\sum_{\theta\in J_{4}}a_{-2\epsilon_{1}}^{\theta}\mathfrak{D}_{\omega_{0}^{\theta}}.$$

(4) If $\lambda_{1}=2p-3$, then $-2\epsilon_{1};\theta;1=-\epsilon_{1}\pm\epsilon_{2}\pm\epsilon_{3};(0,0,0,0);1=p-1$, where $\theta\in J_{3}\backslash J_{4}$. By computing $\varphi_{\bar{1}}([f_{1}, y_{j}])=0$ for $j=1,2,3,4$ and $\chi(f_{1})^{p}\neq0$, we have
\begin{equation}\label{gongshi1}
    \begin{aligned}
    a_{-\epsilon_{1}+\epsilon_{2}+\epsilon_{3}}^{(0,0,0,0)}&=
(1+\alpha)(\lambda_{2}+1)[\frac{\lambda_{3}+1}{2}a_{-2\epsilon_{1}}^{(1,0,0,0)}
+a_{-2\epsilon_{1}}^{(0,1,0,0)}],\\
a_{-\epsilon_{1}+\epsilon_{2}-\epsilon_{3}}^{(0,0,0,0)}&=-(1+\alpha)(\lambda_{2}+1)a_{-2\epsilon_{1}}^{(1,0,0,0)},\\
a_{-\epsilon_{1}-\epsilon_{2}+\epsilon_{3}}^{(0,0,0,0)}&=-(1+\alpha)[(\lambda_{3}+1)a_{-2\epsilon_{1}}^{(1,0,0,0)}
+2a_{-2\epsilon_{1}}^{(0,1,0,0)}],\\
a_{-\epsilon_{1}-\epsilon_{2}-\epsilon_{3}}^{(0,0,0,0)}&=2(1+\alpha)a_{-2\epsilon_{1}}^{(1,0,0,0)}.
    \end{aligned}
\end{equation}
It follows that $\varphi_{\bar{1}}=\sum_{\theta\in J_{3}}a_{-2\epsilon_{1}}^{\theta}\mathfrak{D}_{\omega_{0}^{\theta}}$ by substituting $f_{1}$ for $x$ and other basis elements for $y$ in (\ref{222222}) and $(\ref{gongshi1})$.
In summary we have $\varphi\in\Ider(\mathfrak{g}, Z_{\chi}(\lambda))_{(0)}.$
\end{proof}

For convenience, we define some linear mappings as follows. Let $\psi_{1}$ be an even linear mapping from $\mathfrak{g}$ to $Z_{\chi}(\lambda)$ defined by means of
\begin{align*}
    \psi_{1}(h_{i})&=a_{i}\omega_{0}^{(1,1,1,1)}, i=1,2,3, \\
    \psi_{1}(f_{j})&=a_{-2\epsilon_{j}}\omega_{-2\epsilon_{2}}^{(1,1,1,1)}, j=2,3, \\
    \psi_{1}(e_{1})&=2a_{-2\epsilon_{2}}\omega_{2\epsilon_{1}}^{(1,1,0,0)}-2\alpha a_{-2\epsilon_{3}}\omega_{2\epsilon_{1}}^{(1,0,1,0)}+[(1+\alpha)a_{1}-a_{2}+\alpha a_{3}]\omega_{2\epsilon_{1}}^{(1,0,0,1)}\\
    &-[(1+\alpha)a_{1}-a_{2}-\alpha a_{3}]\omega_{2\epsilon_{1}}^{(0,1,1,0)}-a_{1}\omega_{2\epsilon_{1}}^{(1,1,1,1)},\\
    \psi_{1}(e_{2})&=-a_{2}\omega_{2\epsilon_{2}}^{(1,1,1,1)}, \\
    \psi_{1}(e_{3})&=-a_{3}\omega_{2\epsilon_{3}}^{(1,1,1,1)},\\
    \psi_{1}(x_{1})&=[(1+\alpha)a_{1}-a_{2}-\alpha a_{3}]\omega_{\epsilon_{1}+\epsilon_{2}+\epsilon_{3}}^{(1,1,1,0)}+[(1+\alpha)a_{1}-a_{2}+\alpha a_{3}]\omega_{\epsilon_{1}+\epsilon_{2}+\epsilon_{3}}^{(1,1,0,1)}\\
    &-[(1+\alpha)a_{1}+a_{2}-\alpha a_{3}]\omega_{\epsilon_{1}+\epsilon_{2}+\epsilon_{3}}^{(1,0,1,1)}-[(1+\alpha)a_{1}+a_{2}+\alpha a_{3}]\omega_{\epsilon_{1}+\epsilon_{2}+\epsilon_{3}}^{(0,1,1,1)},\\
    \psi_{1}(x_{2})&=-2\alpha a_{-2\epsilon_{3}}\omega_{\epsilon_{1}+\epsilon_{2}-\epsilon_{3}}^{(1,1,1,0)}+[(1+\alpha)a_{1}-a_{2}+\alpha a_{3}]\omega_{\epsilon_{1}+\epsilon_{2}-\epsilon_{3}}^{(1,1,0,1)}\\
    &+2\alpha a_{-2\epsilon_{3}}\omega_{\epsilon_{1}+\epsilon_{2}-\epsilon_{3}}^{(1,0,1,1)}-[(1+\alpha)a_{1}+a_{2}+\alpha a_{3}]
    \omega_{\epsilon_{1}+\epsilon_{2}-\epsilon_{3}}^{(0,1,1,1)},\\
    \psi_{1}(x_{3})&=-2a_{-2\epsilon_{2}}\omega_{\epsilon_{1}-\epsilon_{2}+\epsilon_{3}}^{(1,1,1,0)}
    -2a_{-2\epsilon_{2}}\omega_{\epsilon_{1}-\epsilon_{2}+\epsilon_{3}}^{(1,1,0,1)}-[(1+\alpha)a_{1}+a_{2}-\alpha a_{3}]\omega_{\epsilon_{1}-\epsilon_{2}+\epsilon_{3}}^{(1,0,1,1)}\\
    &-[(1+\alpha)a_{1}+a_{2}+\alpha a_{3}]\omega_{\epsilon_{1}-\epsilon_{2}+\epsilon_{3}}^{(0,1,1,1)},\\
    \psi_{1}(x_{4})&=-2a_{-2\epsilon_{2}}\omega_{\epsilon_{1}-\epsilon_{2}-\epsilon_{3}}^{(1,1,0,1)}+2\alpha a_{-2\epsilon_{3}}\omega_{\epsilon_{1}-\epsilon_{2}-\epsilon_{3}}^{(1,0,1,1)}-[(1+\alpha)a_{1}+a_{2}+\alpha a_{3}]\omega_{\epsilon_{1}-\epsilon_{2}-\epsilon_{3}}^{(0,1,1,1)}.
    \end{align*}
Let $\psi_{2}$ be an even mapping from $\mathfrak{g}$ to $Z_{\chi}(\lambda)$ defined by means of
\begin{equation*}
    \begin{aligned}
    \psi_{2}(e_{1})&=2\alpha a_{2\epsilon_{3}}\omega_{2\epsilon_{1}}^{(0,1,0,1)},\\
    \psi_{2}(e_{3})&=a_{2\epsilon_{3}}\omega_{2\epsilon_{3}}^{(1,1,1,1)},\\
    \psi_{2}(x_{1})&=-2\alpha a_{2\epsilon_{3}}\omega_{\epsilon_{1}+\epsilon_{2}+\epsilon_{3}}^{(1,1,0,1)}+2\alpha a_{2\epsilon_{3}}\omega_{\epsilon_{1}+\epsilon_{2}+\epsilon_{3}}^{(0,1,1,1)},\\
    \psi_{2}(x_{3})&=2\alpha a_{2\epsilon_{3}}\omega_{\epsilon_{1}-\epsilon_{2}+\epsilon_{3}}^{(0,1,1,1)}.
    \end{aligned}
\end{equation*}
Let $\psi_{3}$ be an even mapping from $\mathfrak{g}$ to $Z_{\chi}(\lambda)$ defined by means of
\begin{equation*}
    \begin{aligned}
    \psi_{3}(e_{1})&=-2a_{2\epsilon_{2}}\omega_{2\epsilon_{2}}^{(0,0,1,1)},\\
    \psi_{3}(e_{2})&=a_{2\epsilon_{2}}\omega_{2\epsilon_{2}}^{(1,1,1,1)},\\
    \psi_{3}(x_{1})&=2a_{2\epsilon_{2}}\omega_{\epsilon_{1}+\epsilon_{2}+\epsilon_{3}}^{(1,0,1,1)}
    +2a_{2\epsilon_{2}}\omega_{\epsilon_{1}+\epsilon_{2}+\epsilon_{3}}^{(0,1,1,1)},\\
    \psi_{3}(x_{2})&=2a_{2\epsilon_{2}}\omega_{\epsilon_{1}+\epsilon_{2}-\epsilon_{3}}^{(0,1,1,1)}.
    \end{aligned}
\end{equation*}
Let $\psi_{4}$ be an odd mapping from $\mathfrak{g}$ to $Z_{\chi}(\lambda)$ defined by means of
\begin{equation*}
    \begin{aligned}
    \psi_{4}(e_{1})&=a_{2\epsilon_{1}}^{(1,1,1,0)}\omega_{2\epsilon_{1}}^{(1,1,1,0)},& \psi_{4}(x_{1})&=-\frac{\lambda_{2}+1}{2}\frac{\lambda_{3}+1}{2}
    a_{2\epsilon_{1}}^{(1,1,1,0)}\omega_{\epsilon_{1}+\epsilon_{2}+\epsilon_{3}}^{(1,1,1,1)},\\
    \psi_{4}(x_{2})&=\frac{\lambda_{2}+1}{2}a_{2\epsilon_{1}}^{(1,1,1,0)}\omega_{\epsilon_{1}+\epsilon_{2}-\epsilon_{3}}^{(1,1,1,1)}, &
    \psi_{4}(x_{3})&=\frac{\lambda_{3}+1}{2}a_{2\epsilon_{1}}^{(1,1,1,0)}\omega_{\epsilon_{1}-\epsilon_{2}+\epsilon_{3}}^{(1,1,1,1)},\\
    \psi_{4}(x_{4})&=-a_{2\epsilon_{1}}^{(1,1,1,0)}\omega_{\epsilon_{1}-\epsilon_{2}-\epsilon_{3}}^{(1,1,1,1)}.&&
    \end{aligned}
\end{equation*}
We can easily check that $\psi_{1}, \psi_{2}, \psi_{3}$ and $\psi_{4}$ are derivations from $\mathfrak{g}$ to $Z_{\chi}(\lambda)$.

\begin{proposition}\label{3.5}
Suppose that $\varphi\in \Der(\mathfrak{g}, Z_{\chi}(\lambda))_{(0)}$. If $\chi(f_{1})^{p}=0$, then there exist $\varphi_{1}$ and $\varphi_{2}$ such that $\varphi=\varphi_{1}+\varphi_{2},$ where $\varphi_{1}\in \Ider(\mathfrak{g}, Z_{\chi}(\lambda))_{(0)}$ and $\varphi_{2}\in \mathrm{span}_{\mathbb{F}}\{\psi_{1},\psi_{2},\psi_{3},\psi_{4}\}$.
\end{proposition}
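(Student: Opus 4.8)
The plan is to use that a superderivation of $\mathfrak{g}$ is determined by its values on a set of generators, and that $\psi_{1},\psi_{2},\psi_{3},\psi_{4}$ were manufactured precisely to realise the non-inner behaviour on such a set. From the multiplication table, $[f_{1},x_{i}]=y_{i}$, $[f_{3},x_{1}]=x_{2}$, $[f_{2},x_{1}]=x_{3}$, $[f_{2},x_{2}]=x_{4}$, together with $[x_{1},y_{2}]=-2e_{2}$, $[x_{1},y_{3}]=-2\alpha e_{3}$, $[x_{1},x_{4}]=2(1+\alpha)e_{1}$ and the three relations $[x_{1},y_{4}],[x_{2},y_{3}],[x_{3},y_{2}]$ whose $\mathfrak{h}$-components are linearly independent (the relevant determinant being $4\alpha(1+\alpha)\neq0$ since $\alpha\neq0,-1$ and $p>3$), show that $\mathfrak{g}$ is generated by $\{f_{1},f_{2},f_{3},x_{1}\}$. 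So it suffices to pin down $\varphi$ on $h_{i}$, $e_{i}$, $f_{i}$ and $x_{1},x_{2},x_{3},x_{4}$.

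First I would record $\varphi(h_{i})$ from Lemma \ref{3.4}: it vanishes unless $\lambda=(2p+2,2p-2,2p-2)$ and $\chi(f_{i})^{p}=0$, in which case $\varphi(h_{i})=a_{i}w_{0}^{(1,1,1,1)}$. Writing $\varphi(f_{i})=\sum_{\theta}a_{-2\epsilon_{i}}^{\theta}w_{-2\epsilon_{i}}^{\theta}$ and $\varphi(e_{i})=\sum_{\theta}a_{2\epsilon_{i}}^{\theta}w_{2\epsilon_{i}}^{\theta}$, and feeding the identity (\ref{222222}) with $x=h_{j}$, $y\in\{f_{i},e_{i}\}$, together with Lemma \ref{33336}, into these expansions, one cuts the surviving coefficients down drastically. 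The role of the hypothesis $\chi(f_{1})^{p}=0$ is that it disables exactly the ``divide by $\chi(f_{1})^{p}$'' move by which, in Proposition \ref{33335}, every coefficient was forced to be a linear combination of the $a_{-2\epsilon_{1}}^{\theta}$; here, in each case, a short list of coefficients survives as genuine free parameters, namely $a_{1},a_{2},a_{3}$, the coefficients $a_{-2\epsilon_{2}},a_{-2\epsilon_{3}}$ of $w_{-2\epsilon_{j}}^{(1,1,1,1)}$ in $\varphi(f_{j})$, the coefficients $a_{2\epsilon_{2}},a_{2\epsilon_{3}}$ of $w_{2\epsilon_{j}}^{(1,1,1,1)}$ in $\varphi(e_{j})$, and $a_{2\epsilon_{1}}^{(1,1,1,0)}$ in $\varphi(e_{1})$. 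These are precisely the parameters built into $\psi_{1}$ $(a_{1},a_{2},a_{3},a_{-2\epsilon_{2}},a_{-2\epsilon_{3}})$, $\psi_{3}$ $(a_{2\epsilon_{2}})$, $\psi_{2}$ $(a_{2\epsilon_{3}})$ and $\psi_{4}$ $(a_{2\epsilon_{1}}^{(1,1,1,0)})$.

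I would then set $\varphi_{2}=c_{1}\psi_{1}+c_{2}\psi_{2}+c_{3}\psi_{3}+c_{4}\psi_{4}$, with the parameters inside the $\psi_{k}$ equal to the free coefficients of $\varphi$ just identified, and choose an inner derivation $\varphi_{1}\in\Ider(\mathfrak{g},Z_{\chi}(\lambda))_{(0)}$ assembled from the $\mathfrak{D}_{\omega_{0}^{\theta}}$ as in the proof of Proposition \ref{33335} (with the care needed for those $\theta$ where $0;\theta;1=p-1$, since then $\mathfrak{D}_{\omega_{0}^{\theta}}(f_{1})=0$), so that $\varphi-\varphi_{1}$ vanishes on $f_{1}$. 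The residual map $\varphi-\varphi_{1}-\varphi_{2}$ is a derivation all of whose coefficients are linear functions of the free parameters; running the remaining relations $\varphi([e_{1},f_{1}])=\varphi(h_{1})$, $\varphi([e_{i},f_{j}])=0$ $(i\neq j)$, $\varphi([f_{2},x_{1}])=f_{2}\varphi(x_{1})-x_{1}\varphi(f_{2})$ and its siblings, $\varphi([x_{i},x_{j}])$ and $\varphi([x_{i},y_{j}])$ through (\ref{222222}) with the help of Lemma \ref{222} shows that it vanishes on all of $h_{i},e_{i},f_{i},x_{1},\ldots,x_{4}$, hence on the generating set, hence is the zero derivation. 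This yields $\varphi=\varphi_{1}+\varphi_{2}$ with $\varphi_{1}\in\Ider(\mathfrak{g},Z_{\chi}(\lambda))_{(0)}$ and $\varphi_{2}\in\mathrm{span}_{\mathbb{F}}\{\psi_{1},\psi_{2},\psi_{3},\psi_{4}\}$.

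The main obstacle is the size of the case analysis, not any isolated difficulty: each target-weight space of $Z_{\chi}(\lambda)$ has dimension up to $|J|=16$, about a dozen weights are involved, and one must branch on the residues of $\lambda_{1},\lambda_{2},\lambda_{3}$ modulo $p$ — whenever some $\beta;\theta;k$ equals $p-1$ — and on whether $\chi(f_{2})^{p}$ and $\chi(f_{3})^{p}$ vanish, producing a tree strictly larger than the eight $\lambda_{1}$-cases of Proposition \ref{33335}. What has to be checked carefully in every branch is that the same short list of free parameters survives and that the propagated coefficients are exactly those of $\psi_{1},\psi_{2},\psi_{3},\psi_{4}$; beyond (\ref{222222}), Lemma \ref{222}, Lemma \ref{3.4} and Lemma \ref{33336}, no new ingredient is required.
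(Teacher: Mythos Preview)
Your proposal is correct and follows essentially the same approach as the paper: both rest on the derivation identity (\ref{222222}), Lemma \ref{222}, Lemma \ref{3.4} and Lemma \ref{33336}, and both reduce to a case analysis on the residue of $\lambda_{1}$ modulo $p$ (the cases $\lambda_{1}\in\{2p,2p\pm2,2p+4\}$ for the even part and $\lambda_{1}\in\{2p\pm1,2p\pm3\}$ for the odd part), with further branching on $\lambda_{2},\lambda_{3}$ and on whether $\chi(f_{2})^{p},\chi(f_{3})^{p}$ vanish. The one organisational difference is that the paper carries out the computation on the full basis in each branch and reads off the inner part plus the surviving $\psi_{k}$ directly, whereas you first isolate a generating set $\{f_{1},f_{2},f_{3},x_{1}\}$ and argue that the residual derivation vanishes on it; this is a legitimate shortcut in principle, but in practice pinning down the values on these four generators forces you through the same relations (and hence the same case tree) that the paper walks explicitly, so the computational content is identical.
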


\begin{proof}
Suppose that $\varphi=\varphi_{\bar{0}}+\varphi_{\bar{1}}$, where $\varphi_{\bar{i}}\in \Der(\mathfrak{g}, Z_{\chi}(\lambda))_{(0),\bar{i}}$ for $i=0,1$. By Proposition \ref{33335}, we need only to describe $\varphi$ in case there exists $\theta'\in J$ or $\beta'\in \{\pm2\epsilon_{i}, \pm\epsilon_{1}\pm\epsilon_{2}\pm\epsilon_{3}\}$ such that $\beta';\theta';1=p-1$.

(a) We first consider $\varphi_{\bar{0}}\in \Der(\mathfrak{g}, Z_{\chi}(\lambda))_{(0),\bar{0}}$.
By Lemma \ref{222}, we shall discuss the cases $\lambda_{1}=2p, 2p\pm2, 2p+4$, respectively.

(a.1)  If $\lambda_{1}=2p+4,$ then $2\epsilon_{1};(1,1,1,1);1=p-1$. By virtue of (\ref{gongshi2}), we note that $a_{2\epsilon_{1}}=0$. Replacing $x$ by $f_{1}$ and $y$ by other basis elements in (\ref{222222}), we obtain that $\varphi_{\bar{0}}=\sum_{\theta\in J_{1}}a_{-2\epsilon_{1}}^{\theta}\mathfrak{D}_{\omega_{0}^{\theta}}$ by Lemma \ref{33336}. It follows that $\varphi_{\bar{0}}\in \Ider(\mathfrak{g}, Z_{\chi}(\lambda))_{(0),\bar{0}}$.

(a.2) If $\lambda_{1}=2p+2$, then (\ref{gongshi3}) holds. By substituting all basis elements for $x$ and $y$ in $(\ref{222222})$ and Lemma \ref{3.4}, we have $$\varphi_{\bar{0}}=\sum_{\theta\in J_{1}\backslash\{(1,1,1,1)\}}a_{-2\epsilon}^{\theta}\mathfrak{D}_{\omega_{0}^{\theta}}+\psi$$
and $\psi\in \Der(\mathfrak{g}, Z_{\chi}(\lambda))_{(0),\bar{0}}$ is defined by means of
\begin{align*}\label{gongshi4}
    \psi(h_{i})&=\left\{
               \begin{array}{ll}
                 a_{i}w_{0}^{(1,1,1,1)} & \hbox{if $\lambda=(2, -2, -2)$},\\
                 0 & \hbox{otherwise},\\
        \end{array}
       \right.\\
    \psi(f_{1})&=a_{-2\epsilon_{1}}\omega_{-2\epsilon_{1}}^{(1,1,1,1)}, \psi(f_{2})=a_{-2\epsilon_{2}}\omega_{-2\epsilon_{2}}^{(1,1,1,1)},
    \psi(f_{3})=a_{-2\epsilon_{3}}\omega_{-2\epsilon_{3}}^{(1,1,1,1)}, \\
    \psi(e_{1})&=-8(1+\alpha)a_{-2\epsilon_{1}}\omega_{2\epsilon_{1}}^{(0,0,0,0)}+2a_{-2\epsilon_{2}}\omega_{2\epsilon_{1}}^{(1,1,0,0)}
    -2\alpha a_{-2\epsilon_{3}}\omega_{2\epsilon_{1}}^{(1,0,1,0)}\\
    &+[(1+\alpha)a_{1}-a_{2}+\alpha a_{3}-(\lambda_{2}+2)a_{-2\epsilon_{2}}+\alpha(\lambda_{3}+2)a_{-2\epsilon_{3}}]
    \omega_{2\epsilon_{1}}^{(1,0,0,1)}\\
    &+[-(1+\alpha)a_{1}+a_{2}+\alpha a_{3}+(\lambda_{2}+2)a_{-2\epsilon_{2}}+\alpha(\lambda_{3}+2)a_{-2\epsilon_{3}}]
    \omega_{2\epsilon_{1}}^{(0,1,1,0)}\\
    &+[2\alpha a_{2\epsilon_{2}}-\alpha(\lambda_{3}+2)\frac{\lambda_{3}}{2}a_{-2\epsilon_{3}}]\omega_{2\epsilon_{1}}^{(0,1, 0,1)}+[(\lambda_{2}+2)\frac{\lambda_{2}}{2}a_{-2\epsilon_{2}}-2a_{2\epsilon_{3}}]\omega_{2\epsilon_{1}}^{(0,0,1,1)}\\
    &
    +a_{1}\omega_{2\epsilon_{1}}^{(1,1,1,1)}\\
    \psi(e_{2})&=a_{2\epsilon_{2}}\omega_{2\epsilon_{2}}^{(1,1,1,1)},\psi(e_{3})=a_{2\epsilon_{3}}\omega_{2\epsilon_{3}}^{(1,1,1,1)},\\
    \psi(y_{1})&=(1+\alpha)\lambda_{2}(a_{-2\epsilon_{1}}\omega_{-\epsilon_{1}+\epsilon_{2}+\epsilon_{3}}^{(1,0,1,1)}
    -\frac{\lambda_{3}}{2}\omega_{-\epsilon_{1}+\epsilon_{2}+\epsilon_{3}}^{(0,1,1,1)}),\\
    \psi(y_{2})&=(1+\alpha)\lambda_{2}a_{-2\epsilon_{1}}\omega_{-\epsilon_{1}+\epsilon_{2}-\epsilon_{3}}^{(0,1,1,1)},\\
    \psi(y_{3})&=(1+\alpha)(-2a_{-2\epsilon_{1}}\omega_{-\epsilon_{1}-\epsilon_{2}+\epsilon_{3}}^{(1,0,1,1)}
    +\lambda_{3}a_{-2\epsilon_{1}}\omega_{-\epsilon_{1}-\epsilon_{2}+\epsilon_{3}}^{(0,1,1,1)}),\\
    \psi(y_{4})&=-2(1+\alpha)a_{-2\epsilon_{1}}\omega_{-\epsilon_{1}-\epsilon_{2}-\epsilon_{3}}^{(0,1,1,1)},\\
    \psi(x_{1})&=(1+\alpha)[4a_{-2\epsilon_{1}}\omega_{\epsilon_{1}+\epsilon_{2}+\epsilon_{3}}^{(1,0,0,0)}
    -2\lambda_{3}a_{-2\epsilon_{1}}\omega_{\epsilon_{1}+\epsilon_{2}+\epsilon_{3}}^{(0,1,0,0)}
    -2a_{-2\epsilon_{1}}\omega_{\epsilon_{1}+\epsilon_{2}+\epsilon_{3}}^{(0,0,1,0)}\\
    &+\lambda_{2}\lambda_{3}a_{-2\epsilon_{1}}\omega_{\epsilon_{1}+\epsilon_{2}+\epsilon_{3}}^{(0,0,0,1)}]+[(1+\alpha)a_{1}-a_{2}-\alpha a_{3}]\omega_{\epsilon_{1}+\epsilon_{2}+\epsilon_{3}}^{(1,1,1,0)}\\
    &+[(1+\alpha)a_{1}-a_{2}+\alpha a_{3}-2\alpha a_{2\epsilon_{3}}]\omega_{\epsilon_{1}+\epsilon_{2}+\epsilon_{3}}^{(1,1,0,1)}\\
    &-[(1+\alpha)a_{1}+a_{2}-\alpha a_{3}-2a_{2\epsilon_{2}}]\omega_{\epsilon_{1}+\epsilon_{2}+\epsilon_{3}}^{(1,0,1,1)}\\
    &-[(1+\alpha)a_{1}+a_{2}+\alpha a_{3}+\lambda_{3}a_{2\epsilon_{2}}+\alpha\lambda_{2}a_{2_{\epsilon_{3}}}]\omega_{\epsilon_{1}+\epsilon_{2}+\epsilon_{3}}^{(0,1,1,1)},\\
    \psi(x_{2})&=4(1+\alpha)a_{-2\epsilon_{1}}\omega_{\epsilon_{1}+\epsilon_{2}-\epsilon_{3}}^{(0,1,0,0)}
    -2(1+\alpha)\lambda_{2}a_{-2\epsilon_{1}}\omega_{\epsilon_{1}+\epsilon_{2}-\epsilon_{3}}^{(0,0,0,1)}\\
    &-2\alpha a_{-2\epsilon_{3}}\omega_{\epsilon_{1}+\epsilon_{2}-\epsilon_{3}}^{(1,1,1,0)}
    -\alpha\lambda_{2}a_{-2\epsilon_{3}}\omega_{\epsilon_{1}+\epsilon_{2}-\epsilon_{3}}^{(1,0,1,1)}\\
    &+[(1+\alpha)a_{1}-a_{2}+\alpha a_{3}+\alpha(\lambda_{3}+2)a_{-2\epsilon_{3}}]\omega_{\epsilon_{1}+\epsilon_{2}-\epsilon_{3}}^{(1,1,0,1)}\\
    &-[(1+\alpha)a_{1}+a_{2}+\alpha a_{3}-2a_{-2\epsilon_{2}}-\alpha\lambda_{2}\frac{\lambda_{3}+2}{2}a_{-2\epsilon_{3}}]
    \omega_{\epsilon_{1}+\epsilon_{2}-\epsilon_{3}}^{(0,1,1,1)},\\
    \psi(x_{3})&=4(1+\alpha)a_{-2\epsilon_{1}}\omega_{\epsilon_{1}-\epsilon_{2}+\epsilon_{3}}^{(0,0,0,1)}
    -2a_{-2\epsilon_{2}}\omega_{\epsilon_{1}-\epsilon_{2}+\epsilon_{3}}^{(1,1,1,0)}
    +\lambda_{3}a_{-2\epsilon_{2}}\omega_{\epsilon_{1}-\epsilon_{2}+\epsilon_{3}}^{(1,1,0,1)}\\
    &-[(1+\alpha)a_{1}+a_{2}-\alpha a_{3}+(\lambda_{2}+2)a_{-2\epsilon_{2}}]\omega_{\epsilon_{1}-\epsilon_{2}+\epsilon_{3}}^{(1,0,1,1)}\\
    &-[(1+\alpha)a_{1}+a_{2}+\alpha a_{3}-(\lambda_{2}+2)\frac{\lambda_{3}}{2}a_{-2\epsilon_{2}}-2\alpha a_{2\epsilon_{3}}]\omega_{\epsilon_{1}-\epsilon_{2}+\epsilon_{3}}^{(0,1,1,1)},\\
    \psi(x_{4})&=4(1+\alpha)a_{-2\epsilon_{1}}\omega_{\epsilon_{1}-\epsilon_{2}-\epsilon_{3}}^{(0,0,0,1)}
    -2a_{-2\epsilon_{2}}\omega_{\epsilon_{1}-\epsilon_{2}-\epsilon_{3}}^{(1,1,0,1)}+2\alpha a_{-2\epsilon_{3}}\omega_{\epsilon_{1}-\epsilon_{2}-\epsilon_{3}}^{(1,0,1,1)}\\
    &-[(1+\alpha)a_{1}+a_{2}+\alpha a_{3}+(\lambda_{2}+2)a_{-2\epsilon_{2}}+\alpha(\lambda_{3}+2)a_{-2\epsilon_{3}}]\omega_{\epsilon_{1}-\epsilon_{2}-\epsilon_{3}}^{(0,1,1,1)}.
    \end{align*}

(a.2.1) For the case $\lambda=(2p+2, 2p-2, 2p-2)$ and $\chi(f_{2})^{p}=\chi(f_{3})^{p}=0$, the inner derivation $\mathfrak{D}_{\omega_{0}}^{(1,1,1,1)}=0$. It follows that $\psi$ is an outer superderivation. Moreover, $\psi=\psi_{1}$.

(a.2.2) For the case $\lambda=(2p+2, 2p-2, 2p)$ and $\chi(f_{2})^{p}=\chi(f_{3})^{p}=0$, then $a_{1}=a_{2}=a_{3}=a_{-2\epsilon_{1}}=a_{-2\epsilon_{2}}=0$.
It follows that $\psi$ decomposes $$\psi=a_{-2\epsilon_{3}}\mathfrak{D}_{\omega_{0}}^{(1,1,1,1)}+\psi_{2}.$$ If $\psi_{2}$ is an inner derivation, then assume that $\psi_{2}=\sum_{\theta}b^{\theta}\mathfrak{D}_{\omega_{0}^{\theta}},$ where $b^{\theta}\in \mathbb{F}$.
Since $\psi_{2}(f_{1})=0$. It follows that $b^{\theta}=0$ for $\theta\in J_{1}\backslash\{(1,1,1,1)\}$.
Then $\psi_{2}=b^{(1,1,1,1)}\mathfrak{D}_{\omega_{0}}^{(1,1,1,1)}$. By virtue of $\lambda_{3}=2p$, we have $\mathfrak{D}_{\omega_{0}}^{(1,1,1,1)}=0$, contradicting the definition of $\psi_{2}$. Hence $\psi_{2}$ is an outer superderivation.

(a.2.3) For the case $\lambda=(2p+2, 2p, 2p-2)$ and $\chi(f_{2})^{p}=\chi(f_{3})^{p}=0$, then $a_{1}=a_{2}=a_{3}=a_{-2\epsilon_{1}}=a_{-2\epsilon_{3}}=0$. It follows that $\psi$ decomposes $$\psi=a_{-2\epsilon_{2}}\mathfrak{D}_{\omega_{0}}^{(1,1,1,1)}+\psi_{3}.$$ We have $\psi_{3}$ is an outer superderivation, the proof is similar to the proof of $\psi_{2}$.

(a.2.4) For the other cases of $\lambda_{2}$ and $\lambda_{3}$, we could check that $\psi=a_{-2\epsilon_{1}}\mathfrak{D}_{\omega_{0}}^{(1,1,1,1)}$ by a similar calculation as above, that is, $\psi$ is an inner derivation.

(a.3) If $\lambda_{1}=2p$, then (\ref{gongshi6}) holds. By substituting $x$ and $y$ for all basis elements in (\ref{222}), we obtain that
\begin{equation*}
    \begin{aligned}
    \varphi&=a_{-2\epsilon_{1}}^{(0,0,0,0)}\mathfrak{D}_{\omega_{0}^{(0,0,0,0)}}+a_{-\epsilon_{1}-\epsilon_{2}-\epsilon_{3}}^{(1,1,0,1)}\mathfrak{D}_{\omega_{0}^{(1,1,0,0)}}
    +a_{-\epsilon_{1}-\epsilon_{2}-\epsilon_{3}}^{(1,0,1,1)}\mathfrak{D}_{\omega_{0}^{(1,0,1,0)}}\\
    &-[\frac{(\lambda_{2}+2)}{2}a_{-\epsilon_{1}-\epsilon_{2}-\epsilon_{3}}^{(1,1,0,1)}+a_{-\epsilon_{1}+\epsilon_{2}-\epsilon_{3}}^{(1,1,0,1)}]
    \mathfrak{D}_{\omega_{0}^{(1,0,0,1)}}+a_{-\epsilon_{1}+\epsilon_{2}+\epsilon_{3}}^{(1,1,1,0)}D_{\omega_{0}^{(0,1,1,0)}}\\
    &+(a_{-\epsilon_{1}+\epsilon_{2}+\epsilon_{3}}^{(1,1,0,1)}+\frac{\lambda_{3}}{2}a_{-\epsilon_{1}+\epsilon_{2}-\epsilon_{3}}^{(1,1,0,1)})
    \mathfrak{D}_{\omega_{0}^{(0,1,0,1)}}+a_{-2\epsilon_{1}}\mathfrak{D}_{\omega_{0}^{(1,1,1,1)}}\\
    &+(a_{-\epsilon_{1}+\epsilon_{2}+\epsilon_{3}}^{(1,0,1,1)}+\frac{\lambda_{2}}{2}a_{-\epsilon_{1}-\epsilon_{2}+\epsilon_{3}}^{(1,0,1,1)})
    \mathfrak{D}_{\omega_{0}^{(0,0,1,1)}}.
    \end{aligned}
\end{equation*}
It follows that $\varphi\in \Ider(\mathfrak{g}, Z_{\chi}(\lambda))_{(0),\bar{0}}$.

(a.4) If $\lambda_{1}=2p-2$, then (\ref{gongshi5}) holds. By substituting $x$ and $y$ for all basis elements in (\ref{222}), we obtain that
\begin{equation*}
    \begin{aligned}
    \varphi&=\sum_{\theta\in J_{1}\backslash \{(1,1,0,0)\}}a_{-2\epsilon_{1}}^{\theta}\mathfrak{D}_{\omega_{0}^{\theta}}\\
    &+[a_{-\epsilon_{1}+\epsilon_{2}+\epsilon_{3}}^{(1,0,0,0)}-(1+\alpha)(\lambda_{2}+2)a_{-2\epsilon_{1}}^{(1,1,0,0)}]\mathfrak{D}_{\omega_{0}^{(1,1,0,0)}}.
    \end{aligned}
\end{equation*}
It shows that $\varphi\in \Ider(\mathfrak{g}, Z_{\chi}(\lambda))_{(0),\bar{0}}$.

(b) We consider $\varphi_{\bar{1}}\in \Der(\mathfrak{g}, Z_{\chi}(\lambda))_{(0),\bar{1}}$. By Lemma \ref{222}, we shall discuss the cases $\lambda_{1}=2p\pm1, 2p\pm3$, respectively.

(b.1) If $\lambda_{1}=2p-3$, then $-2\epsilon_{1};\theta;1=-\epsilon_{1}\pm\epsilon_{2}\pm\epsilon_{3};(0,0,0,0);1=p-1$, where $\theta\in J_{3}\backslash J_{4}$. By substituting $x$ and $y$ for all basis elements in (\ref{222}), then $(\ref{gongshi1})$ holds. It follows that $$\varphi_{\bar{1}}=\sum_{\theta\in J_{3}}a_{-2\epsilon_{1}}^{\theta}\mathfrak{D}_{\omega_{0}}^{\theta}.$$

(b.2) If $\lambda_{1}=2p-1$, then (\ref{gongshi7}) holds. By substituting $x$ and $y$ for all basis elements in (\ref{222}), we obtain that
\begin{equation*}
    \begin{aligned}
    \varphi_{\bar{1}}&=\sum_{\theta\in J_{4}}a_{-2\epsilon_{1}}^{\theta}\mathfrak{D}_{\omega_{0}^{\theta}}+a_{-\epsilon_{1}-\epsilon_{2}-\epsilon_{3}}^{(1,0,0,1)}\mathfrak{D}_{\omega_{0}^{(1,0,0,0)}}\\
    &+[a_{-\epsilon_{1}-\epsilon_{2}-\epsilon_{3}}^{(0,1,0,1)}-2(1+\alpha)a_{-2\epsilon_{1}}^{(1,1,0,1)}]\mathfrak{D}_{\omega_{0}^{(0,1,0,0)}}\\
    &+[a_{-\epsilon_{1}-\epsilon_{2}-\epsilon_{3}}^{(0,0,1,1)}-2(1+\alpha)a_{-2\epsilon_{1}}^{(1,0,1,1)}]\mathfrak{D}_{\omega_{0}^{(0,0,1,0)}}\\
    &-(\frac{\lambda_{2}+1}{2}a_{-\epsilon_{1}-\epsilon_{2}-\epsilon_{3}}^{(0,1,0,1)}+a_{-\epsilon_{1}+\epsilon_{2}-\epsilon_{3}}^{(0,1,0,1)})\mathfrak{D}_{\omega_{0}^{(0,0,0,1)}}.
    \end{aligned}
\end{equation*}
It follows that $\varphi_{\bar{1}}$ is an inner derivation.

(b.3) If $\lambda_{1}=2p+1$, then (\ref{gongshi8}) holds. By substituting $x$ and $y$ for all basis elements in (\ref{222}), we obtain that
\begin{equation*}
    \begin{aligned}
    \varphi_{\bar{1}}&=\sum_{\theta\in J_{3}\backslash J_{4}}a_{-2\epsilon_{1}}^{\theta}\mathfrak{D}_{\omega_{0}^{\theta}}+a_{-\epsilon_{1}-\epsilon_{2}-\epsilon_{3}}\mathfrak{D}_{\omega_{0}^{(1,1,1,0)}}\\
    &-[a_{-\epsilon_{1}-\epsilon_{2}+\epsilon_{3}}+\frac{\lambda_{3}+1}{2}a_{-\epsilon_{1}-\epsilon_{2}-\epsilon_{3}}]\mathfrak{D}_{\omega_{0}^{(1,1,0,1)}}\\
    &+[a_{-\epsilon_{1}+\epsilon_{2}-\epsilon_{3}}+\frac{\lambda_{2}+1}{2}a_{-\epsilon_{1}-\epsilon_{2}-\epsilon_{3}}]\mathfrak{D}_{\omega_{0}^{(1,0,1,1)}}\\
    &-[\frac{\lambda_{2}+1}{2}\frac{\lambda_{3}+1}{2}a_{-\epsilon_{1}-\epsilon_{2}-\epsilon_{3}}+\frac{\lambda_{2}+1}{2}
    a_{-\epsilon_{1}-\epsilon_{2}+\epsilon_{3}}+a_{-\epsilon_{1}+\epsilon_{2}+\epsilon_{3}}\\
    &\quad+\frac{\lambda_{3}+1}{2}a_{-\epsilon_{1}+\epsilon_{2}-\epsilon_{3}}]\mathfrak{D}_{\omega_{0}^{(0,1,1,1)}}.
    \end{aligned}
\end{equation*}
It follows that $\varphi_{\bar{1}}$ is an inner derivation.

(b.4) If $\lambda_{1}=2p+3$, then $2\epsilon_{1};\theta_{1};1=\epsilon_{1}\pm\epsilon_{2}\pm\epsilon_{3};(1,1,1,1);1=p-1$, where $\theta_{1}\in J_{4}$. By substituting $x$ and $y$ for all basis elements in (\ref{222}),
we obtain that $$\varphi_{\bar{1}}=\sum_{\theta\in J_{3}}a_{-2\epsilon_{1}}\mathfrak{D}_{\omega_{0}}^{\theta}+\psi.$$
If $\lambda_{2}=\lambda_{3}=2p-3$ and $\chi(f_{2})^{p}=\chi(f_{3})^{p}=0$, then $\psi=\psi_{4}$. Suppose that $\psi_{4}$ is an inner derivation. We may assume that $\psi_{4}=\sum_{\theta\in J_{3}}b^{\theta}\mathfrak{D}_{\omega_{0}}^{\theta}$. Since $\psi_{4}(f_{1})=0$.
It follows that $b^{\theta}=0$ for $\theta\in J_{3}$, contradicting the definition of $\psi_{4}$. Hence $\psi_{4}$ is an outer superderivation. For other case of $\lambda_{2}, \lambda_{3}$ and $\chi$, we have $\psi_{4}=0$. It follows that $\varphi_{\bar{1}}=\sum_{\theta\in J_{3}}a_{-2\epsilon_{1}}\mathfrak{D}_{\omega_{0}}^{\theta}$. Moreover, $\varphi_{\bar{1}}\in \Ider(\mathfrak{g},Z_{\chi}(\lambda))_{(0),\bar{1}}$.
In summary we obtain the desired result.

\end{proof}

\begin{theorem}\label{3.6}
Let $\mathfrak{g}=D(2, 1; \alpha).$ For $i=1,2,3$ we have $$\H^{1}(\mathfrak{g}, Z_{\chi}(\lambda))=\left\{
               \begin{array}{ll}
                 \mathbb{F}\psi_{1} & \hbox{if $\lambda=(2p+2, 2p-2, 2p-2)$ and $\chi(f_{i})^{p}=0$},\\
                 \mathbb{F}\psi_{2} & \hbox{if $\lambda=(2p+2, 2p-2, 2p)$ and $\chi(f_{i})^{p}=0$},\\
                 \mathbb{F}\psi_{3} & \hbox{if $\lambda=(2p+2, 2p, 2p-2)$ and $\chi(f_{i})^{p}=0$},\\
                 \mathbb{F}\psi_{4} & \hbox{if $\lambda=(2p+3, 2p-3, 2p-3)$ and $\chi(f_{i})^{p}=0$},\\
                 0 & \hbox{otherwise}.
                 \end{array}
                 \right.$$
Moreover, $$\mathrm{sdim}\H^{1}(\mathfrak{g}, Z_{\chi}(\lambda))=\left\{
               \begin{array}{ll}
                 (6,0) & \hbox{if $\lambda=(2p+2, 2p-2, 2p-2)$ and $\chi(f_{i})^{p}=0$},\\
                 (1,0) & \hbox{if $\lambda=(2p+2, 2p-2, 2p)$ and $\chi(f_{i})^{p}=0$},\\
                 (1,0) & \hbox{if $\lambda=(2p+2, 2p, 2p-2)$ and $\chi(f_{i})^{p}=0$},\\
                 (0,1) & \hbox{if $\lambda=(2p+3, 2p-3, 2p-3)$ and $\chi(f_{i})^{p}=0$},\\
                 0 & \hbox{otherwise}.
                 \end{array}
                 \right.$$
\end{theorem}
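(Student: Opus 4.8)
The plan is to feed the structural results established earlier in this section into the identification
\[
\H^{1}(\mathfrak{g}, Z_{\chi}(\lambda))\;\cong\;\Der(\mathfrak{g}, Z_{\chi}(\lambda))_{(0)}\big/\bigl(\Der(\mathfrak{g}, Z_{\chi}(\lambda))_{(0)}\cap\Ider(\mathfrak{g}, Z_{\chi}(\lambda))\bigr)
\]
recorded in Section~\ref{2}, so that the computation of $\H^{1}$ reduces to measuring how far the $0$-weight derivations fail to be inner. If $\chi(f_{1})^{p}\neq0$, then Proposition~\ref{33335} gives $\Der(\mathfrak{g}, Z_{\chi}(\lambda))_{(0)}\subseteq\Ider(\mathfrak{g}, Z_{\chi}(\lambda))_{(0)}$, hence $\H^{1}(\mathfrak{g}, Z_{\chi}(\lambda))=0$; this disposes of one part of the ``otherwise'' alternative, and from here on I would assume $\chi(f_{1})^{p}=0$.

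Under this assumption Proposition~\ref{3.5} gives $\Der(\mathfrak{g}, Z_{\chi}(\lambda))_{(0)}=\Ider(\mathfrak{g}, Z_{\chi}(\lambda))_{(0)}+\mathrm{span}_{\mathbb{F}}\{\psi_{1},\psi_{2},\psi_{3},\psi_{4}\}$, so
\[
\H^{1}(\mathfrak{g}, Z_{\chi}(\lambda))\;\cong\;\mathrm{span}_{\mathbb{F}}\{\psi_{1},\psi_{2},\psi_{3},\psi_{4}\}\big/\bigl(\mathrm{span}_{\mathbb{F}}\{\psi_{1},\psi_{2},\psi_{3},\psi_{4}\}\cap\Ider(\mathfrak{g}, Z_{\chi}(\lambda))_{(0)}\bigr),
\]
and the whole problem becomes deciding, for each $\lambda$ and each $\chi$ with $\chi(f_{1})^{p}=0$, which of the four families genuinely survives in this quotient. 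I would run the case split used in the proof of Proposition~\ref{3.5}, organized by the residue of $\lambda_{1}$ and refined by Lemma~\ref{3.4}: the even families $\psi_{1},\psi_{2},\psi_{3}$ can occur only for $\lambda_{1}=2p+2$ and the odd family $\psi_{4}$ only for $\lambda_{1}=2p+3$, while for every other residue of $\lambda_{1}$ the derivation identities applied to pairs $(h_{i},y)$ and $(e_{i},y)$, together with Lemmas~\ref{3.4} and~\ref{33336}, force $\varphi_{2}$ into $\Ider(\mathfrak{g}, Z_{\chi}(\lambda))_{(0)}$ and give $\H^{1}=0$. When $\lambda_{1}=2p+2$, the further constraints on $\lambda_{2},\lambda_{3}$ and on $\chi(f_{2})^{p},\chi(f_{3})^{p}$ coming from the same identities pin the surviving family to $\psi_{1}$ precisely when $\lambda=(2p+2,2p-2,2p-2)$, to $\psi_{2}$ precisely when $\lambda=(2p+2,2p-2,2p)$, and to $\psi_{3}$ precisely when $\lambda=(2p+2,2p,2p-2)$, in all three cases with $\chi(f_{2})^{p}=\chi(f_{3})^{p}=0$; likewise $\psi_{4}$ survives only for $\lambda=(2p+3,2p-3,2p-3)$ with $\chi(f_{2})^{p}=\chi(f_{3})^{p}=0$. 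In every remaining situation---including $\chi(f_{1})^{p}=0$ together with $\chi(f_{2})^{p}\neq0$ or $\chi(f_{3})^{p}\neq0$---the $\psi_{i}$ that occur are inner, so $\H^{1}=0$. Since the four conditions on $\lambda$ are pairwise incompatible, the five alternatives in the statement are mutually exclusive and exhaustive.

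It then remains to compute the four nonzero groups. In each case I would check that the quotient has the dimension asserted; the point---already isolated inside the proof of Proposition~\ref{3.5}---is that for these special $\lambda$ and $\chi$ the relevant family cannot be inner, because for $\psi_{1},\psi_{2},\psi_{3}$ the distinguished inner derivation $\mathfrak{D}_{\omega_{0}^{(1,1,1,1)}}$ vanishes identically (the vector $\omega_{0}^{(1,1,1,1)}$ being then zero or annihilated by all of $\mathfrak{g}$) while the family is not, and for $\psi_{4}$ the family vanishes on $f_{1}$ whereas every nonzero odd $0$-weight inner derivation does not. Counting the free parameters of $\psi_{1}$ (respectively $\psi_{2},\psi_{3},\psi_{4}$) and discarding the inner directions killed in this way yields $\dim\H^{1}=6$ (respectively $1,1,1$); since $\psi_{1},\psi_{2},\psi_{3}$ are even and $\psi_{4}$ is odd, the superdimensions are $(6,0)$, $(1,0)$, $(1,0)$, $(0,1)$ respectively, and $0$ otherwise.

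The step I expect to be the main obstacle is the bookkeeping in the second paragraph: checking, residue by residue of $\lambda_{1}$, that outside the four exceptional weights every candidate $\varphi_{2}$ collapses into an inner derivation---in particular handling the odd part when $\lambda_{1}\in\{2p\pm1,2p\pm3\}$---and confirming in the four exceptional cases that $\mathrm{span}_{\mathbb{F}}\{\psi_{1},\psi_{2},\psi_{3},\psi_{4}\}\cap\Ider(\mathfrak{g}, Z_{\chi}(\lambda))_{(0)}$ has the expected codimension. Both rely on the explicit $u_{\chi}(\mathfrak{g})$-module structure of $Z_{\chi}(\lambda)$ and on the divided-power identity $f_{k}^{p}=\chi(f_{k})^{p}$, which is exactly where the hypotheses $\chi(f_{i})^{p}=0$ are used.
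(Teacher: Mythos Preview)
Your proposal is correct and follows essentially the same route as the paper: reduce to $0$-weight derivations modulo inner ones, invoke Proposition~\ref{33335} for $\chi(f_{1})^{p}\neq0$ and Proposition~\ref{3.5} for $\chi(f_{1})^{p}=0$, then read off from the case analysis inside the proof of Proposition~\ref{3.5} which of $\psi_{1},\psi_{2},\psi_{3},\psi_{4}$ survives and count its free parameters. The paper's own proof is a two-line pointer to exactly these propositions together with the observation that $\psi_{1}$ depends on six scalars, so your write-up is a faithful (and more detailed) expansion of it.
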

\begin{proof}
By the definition of $\psi_{1}$, we may obtain that $\psi_{1}$ is a linear mapping with six variables $a_{i}$ and $a_{-2\epsilon_{1}},$ $i=1,2,3$. Then this follows directly from the proof of Proposition \ref{33335} and Proposition \ref{3.5}.
\end{proof}

\subsection*{Acknowledgements}

The second named author was supported by the NSF of China (12061029), the
NSF of Hainan Province (120RC587) and the
NSF of Heilongjiang Province (YQ2020A005).

\end{document}